\theoremstyle{plain}
\newtheorem{theorem}{Theorem}[section]
\newtheorem{corollary}[theorem]{Corollary}
\newtheorem{proposition}[theorem]{Proposition}
\newtheorem{lemma}[theorem]{Lemma}
\newtheorem{remark}[theorem]{Remark}
\newtheorem{example}[theorem]{Example}
\numberwithin{equation}{section}
\numberwithin{equation}{section}
\def\XXint#1#2#3{{\setbox0=\hbox{$#1{#2#3}{\int}$}
\vcenter{\hbox{$#2#3$}}\kern-.5\wd0}}
\begin{document}
\title[Bifurcation diagrams in two dimensions]{Bifurcation diagrams of semilinear elliptic equations for supercritical nonlinearities in two dimensions}
\author{Kenta Kumagai}
\address{Department of Mathematics, Tokyo Institute of Technology}
\thanks{This work was supported by JSPS KAKENHI Grant Number 23KJ0949}
\email{kumagai.k.ah@m.titech.ac.jp}
\date{\today}

\begin{abstract}
We consider the Gelfand problem with general supercritical nonlinearities in the two-dimensional unit ball. In this paper, we prove the non-existence of an unstable solution for any positive small parameter $\lambda$. The result implies that once the bifurcation curve emanates from the starting point, then the curve never approaches $\lambda=0$. As a result, we obtain the existence of a radial singular solution.
In addition, we prove the uniformly boundedness of finite Morse index solutions.
As a result, we prove that the bifurcation curve has infinitely many turning points. We remark that these properties are well-known in $N$ dimensions with $3\le N \le 9$ and less known in two dimensions. Our results clarify that the bifurcation structure is solely determined by the supercriticality of the nonlinearities if $2\le N\le 9$.
\end{abstract}
\keywords{Supercritical semilinear elliptic equation, Bifurcation diagram, Two dimensions, Non-existence, Finite Morse index solutions}
    \subjclass[2020]{Primary 35B32, 35J61; Secondary 35J25, 35B35}

\maketitle

\raggedbottom

\section{Introduction}
Let $N=2$ and $B_1\subset \mathbb{R}^N$ be the unit ball.
We consider the elliptic equation
\begin{equation}
\label{gelfand}
\left\{
\begin{alignedat}{4}
 -\Delta u&=\lambda f(u)&\hspace{2mm} &\text{in } B_1,\\
u&>0  & &\text{in } B_1, \\
u&=0  & &\text{on } \partial B_1,
\end{alignedat}
\right.
\end{equation}
where $\lambda>0$ is a parameter. Throughout this paper, we
assume that $f$ satisfies the following \eqref{asf1}, \eqref{asf0} and \eqref{asf2}.
\begin{equation}
\label{asf1}
\text{$f\in C^2[0,\infty)$ such that $f(0)\ge 0$, 
$f'\ge 0$ and $f''\ge 0$.} 
\tag{$f_1$}
\end{equation}
\begin{equation}
\label{asf0}
f'(0)>0 \hspace{6mm}\text{if $f(0)=0$}.  
\tag{$f_2$}
\end{equation}
\begin{equation}
\label{asf2}
\mathcal{A}:=\limsup_{u\to\infty}\left(\frac{F(u)\log F(u)}{f(u)}\right)'<\frac{1}{2},
\tag{$f_3$}
\end{equation}
where 
\begin{equation*}
F(u):=\int_{0}^{u}f(s)\,ds.
\end{equation*}
The assumption \eqref{asf2} implies the supercriticality of $f$ in the sense of the Trudinger-Morser imbedding (see \cite{Tru, Mor}). Indeed, we can confirm that $f(u)=e^{u^p}$ satisfies \eqref{asf2} for $p>2$. For the computation of $\mathcal{A}$ for typical nonlinearities, see Example \ref{ugu}. On the other hand, it follows from Lemma \ref{basiclem} and Remark \ref{basicrem} below that if $f$ satisfies \eqref{asf1}, \eqref{asf0} and \eqref{asf2}, then there exist $q>2$, $c>0$ and $C>0$ such that $f(u)\ge C \exp(c^q u^q)$ for any $u$ sufficiently large.

We are interested in the bifurcation structure of \eqref{gelfand}. By the symmetric result of Gidas, Ni and Nirenberg \cite{Gidas}, each solution of \eqref{gelfand} is radially symmetric and decreasing; thus \eqref{gelfand} is reduced to an ODE problem. Moreover, the set of solutions of \eqref{gelfand} is an unbounded analytic curve represented by $\{(\lambda(\alpha), u(r, \alpha)) ; \alpha>0\}$, where $u(r,\alpha)$ is the solution satisfying $\lVert u \rVert_{L^{\infty}(B_1)}=u(0)=\alpha$. In addition, we remark that the value $f(0)$ makes a difference to the bifurcation structure. In fact, when $f(0)>0$, the bifurcation curve has the stable branch emanating from $(0,0)$ and going to some $\lambda=\lambda^{*}$, and turning back at $\lambda^{*}$, where no solution exists for $\lambda>\lambda^{*}$. Here, we say that a solution $u$ of \eqref{gelfand} is stable if the principal eigenvalue of the linearized operator $-\Delta-\lambda f'(u)$ is nonnegative. On the other hand, when $f(0)=0$, no solution exists for $\lambda>\frac{\lambda_1}{f'(0)}$, where $\lambda_1 >0$ is the first eigenvalue of the Dirichlet-Laplacian in the $N$-dimensional unit ball. Moreover, the bifurcation curve does not have the stable branch and the bifurcation curve is emanating from $(\frac{\lambda_1}{f'(0)},0)$. We note that these properties are satisfied for supercritical nonlinearities even when $3\le N\le 9$. 
We refer to \cite{korman,Dup,Mi2014, Mi2015,Guowei,cabrecapella, korman2014} for details in this paragraph.

When $3\le N\le 9$, in addition to the properties stated above, the following properties for the bifurcation structure have been shown for supercritical nonlinearities.
\begin{itemize}
    \item[{(a)}] Equation \eqref{gelfand} has no unstable solution when the parameter $\lambda$ is sufficiently small, i.e., the bifurcation curve emanating from the starting point never approaches to $\lambda=0$ again.   
    \item [{(b)}] There exists a unique radial singular solution $(\lambda_{*}, U_{*})$ such that the bifurcation curve converges to the singular solution.
    \item [{(c)}] The bifurcation curve turns infinitely many times, i.e., $\lambda(\alpha)$ has infinitely many extreme points.
    \item[{(d)}] If the Morse index of solutions to \eqref{gelfand} are uniformly bounded, then the $L^\infty$-norms of the solutions are uniformly bounded.
\end{itemize}
Here, we say that $U_{*}\in C^2_{\mathrm{loc}}(0,1]$ is a radial singular solution of \eqref{gelfand} for $\lambda=\lambda_{*}$ if $U_{*}(r)$ is a regular solution of \eqref{gelfand} in $(0,1]$ for $\lambda=\lambda_{*}$ so that $U_{*}(r)\to\infty$ as $r\to 0$. In addition, we define the Morse index $m(u)$ as the maximal dimension of a subspace $X\subset C^{\infty}_{0}(B_1)$ such that for any $\xi\in X$, it holds
\begin{equation*}
    \int_{B_1} |\nabla \xi|^2-\lambda f'(u) \xi^2\,dx<0.
\end{equation*}
Property (a) is shown by \cite{Lin} in the case $N\ge 3$ for all nonlinearities satisfying the following supercritical condition
\begin{equation}
\label{lincon}
f(t)t>\left(\frac{2N}{N-2}+\varepsilon\right)F(t), \hspace{4mm}t\ge  t_0 \hspace{6mm}\text{for some $\varepsilon>0$ and $t_0>0$.}
\end{equation}
Moreover, the author also proved the existence of a radial singular solution with the assumption \eqref{lincon}. In addition, there are a number of studies trying to prove property (b) with $N\ge 3$ and property (c) with $3\le N\le 9$ for typical supercritical nonlinearities, such as $f(u)=e^u$, $f(u)=(1+u)^p$ with $p>\frac{N+2}{N-2}$, $f(u)=e^{u^p}$ with $p>1$ and $f(u)=e^{e^u}$. We refer to \cite{Marius, KiWei, chend, Mi2014, Mi2015, Mi2020, Mi2023} for \eqref{gelfand} and \cite{Guowei,Merle, Nor,Flore} for related problems. As a result, property (b) is proved in \cite{Mi2023} for a class of nonlinearities $f$ including these typical nonlinearities but satisfying 
\eqref{lincon} with $N\ge 3$. Moreover, property (c) is proved in \cite{Mi2018,Mi2024} for the same class of $f$ with $3\le N\le 9$. On the other hand, property (d) is proved in \cite{Figalli} with $3\le N\le 9$ for all positive nondecreasing and convex nonlinearities satisfying \eqref{lincon} for all bounded convex domains. Moreover, if additionally the analyticity of $f$ is assumed and the domain is a ball, the authors proved in \cite{Figalli} property (c) as a result of property (d). 
Here, we mention that the gradient estimate 
\begin{equation*}
 0<-ru'< (N-2-\hat{\varepsilon})u   \hspace{6mm} \text{with some $\hat{\varepsilon}>0$ depending only on $\varepsilon$}
\end{equation*}
derived from a Pohozaev-type identity in \cite{Niserrin} (or a Pohozaev-type argument for general domains) and the condition \eqref{lincon} plays a crucial role in proving these properties.

In contrast to the case $3\le N\le 9$, until recently, there have been few studies trying to prove these properties for supercritical nonlinearities in two dimensions. Property (a) is treated in \cite{MM} 
for $f(u)=\lambda_1 u + (e^{u^p}-1)$ with $p>2$ (and the authors announced that the result can be applied to more general nonlinearities with similar growth rates). Moreover, explicit radial singular solutions are given by \cite{Gh,Dh} for $f(u)=e^{u^p}u^{1-2p}$ with $p>1$, $u>1$, and $f(u)=e^{e^{u}-2u}$ with $u>1$.

In two dimensions, a gradient estimate corresponding to that in the case $N\ge 3$ does not follow from 
the Pohozaev-type identity. Instead of this, we focus on the key identity \eqref{POHO} obtained in \cite{tang}, and thus we obtain the gradient estimate \eqref{poho}. In addition, we focus on an interaction between \eqref{poho} and \eqref{asf2}. As a result, we show the properties (a), (c), (d), and part of property (b) for general supercritical nonlinearities in two dimensions. 

\begin{theorem}
\label{mainthms}
We assume that $N=2$ and $f$ satisfies \eqref{asf1}, \eqref{asf0} and \eqref{asf2}. Then, the following properties are satisfied.
\begin{enumerate}
    \item[\rm{(A)}] There exists some $0<\lambda_0 <\infty$ such that \eqref{gelfand} has no unstable solution for each $0<\lambda<\lambda_0$. More precisely,
\begin{itemize}
\item[{\rm{(i)}}] In the case $f(0)>0$, the equation \eqref{gelfand} has only the minimal solution for each $\lambda<\lambda_0$.
\item[{\rm{(ii)}}] In the case $f(0)=0$, the equation \eqref{gelfand} has
no solution for each $\lambda<\lambda_0$.    
\end{itemize}
\item[\rm{(B)}]
The bifurcation curve of \eqref{gelfand} has at least one asymptotic bifurcation point $\lambda_*\in [\lambda_0,\infty)$. More precisely, for any sequence  $\{\alpha_n\}_{n\in \mathbb{N}}$, there exist a subsequence $\{\alpha_{n_{j}}\}_{j\in \mathbb{N}}$ and a radial singular solution $U_{*}\in C^2_{\mathrm{loc}}(0,1]$ for $\lambda=\lambda_*$ such that $\alpha_{n_j}\to \infty$,
$\lambda(\alpha_{n_j})\to\lambda_*$, and $u(r,\alpha_{n_j})\to U_{*}$ in $C^2_{\mathrm{loc}}(0,1]$ as $j\to \infty$.

\item[\rm{(C)}] 
We assume in addition that $f$ can be extended to an analytic function on $\mathbb{R}$. Then,
the bifurcation curve of \eqref{gelfand} has infinitely many turning points.

\item[\rm{(D)}]
It holds $m(u(r,\alpha))\to\infty$ as $\alpha\to\infty$.
\end{enumerate}
\end{theorem}

\begin{remark}
 \rm{Properties (A), (C), and (D) correspond to properties (a), (c), and (d) in two dimensions. On the other hand, the uniqueness of a radial singular solution is necessary to derive an analogue of property (b) in two dimensions from property (B). It is known in \cite{Mi2020, Mi2023} that the Emden-Fowler transformation plays a key role in order to prove the uniqueness result for $N\ge 3$. On the other hand, this transformation does not work well for $N=2$ and thus it is not guaranteed whether the uniqueness result holds in two dimensions.}   
\end{remark}

We should remark that very recently, Fujishima, Ioku, Ruf and Terraneo \cite{FIRT} proved the existence of a radial singular solution and the asymptotic behavior of the singular solution for a wide class of nonlinearities (including some subcritical nonlinearities). We remark that the result in \cite{FIRT} can be applied to the nonlinearities treated in \cite{Gh,Dh} and some typical nonlinearities, such as $f(u)=u^k e^{u^p}$ with $k\in \mathbb{R}$, $p>1$, $f(u)=e^{u^{p_1}+u^{p_2}}$ with $p_1>1$, $p_2<\frac{p_1}{2}$, and $f(u)=e^{e^u}$ with $k\in \mathbb{R}$. However, the result in \cite{FIRT} can not be applied to $f(u)=e^{u^{p}(\log u)^r}$ with $p>1$, $r\in \mathbb{R}$ and $f(u)=e^{e^{u^p}}$ with $p>1$. Theorem \ref{mainthms} (B) can be applied to all of the above nonlinearities which are supercritical (see \eqref{asf2} and Example \ref{ugu}). We also remark that Naimen \cite{Naimen} very recently proved the result for some supercritical nonlinearities including $f(u)=e^{u^p}$ with $p>2$ by focusing on the concentration phenomenon of the solutions (and in addition, the author also proved that the bifurcation curve oscillates infinitely many times around some $\lambda_{*}$). Our method is based on the identity \eqref{POHO} and the condition \eqref{asf2}, and thus yields the result for general analytic supercritical nonlinearities.

The novelty of this paper is to prove that various properties for the bifurcation curve with supercritical nonlinearities which are commonly observed in the case $3\le N\le 9$ are also valid in two dimensions. We remark that the celebrated paper of Joseph and Lundgren \cite{JL} states that the bifurcation structure exhibits completely different phenomena in two dimensions and the other dimensions for $f(u)=e^u$. 
Here, we point out that $f(u)=e^u$ is subcritical if $N=2$ and supercritical if $N\ge 3$. Our results reveal that regardless of the dimension $N$, the supercriticality of the nonlinearities determines the bifurcation structure in the case $2\le N \le 9$.

This paper consists of four sections. In Section \ref{basicsec}, we compute $\mathcal{A}$ for typical nonlinearities and introduce some properties for nonlinearities satisfying \eqref{asf2}. In addition, we give some notations used throughout this paper. In Section \ref{pohosec}, we combine the condition \eqref{asf2} with the gradient estimate \eqref{poho} derived from the identity \eqref{POHO}. As a result, we prove property (A) and property (B). In Section \ref{oscisec}, we get the $H^1$-estimate for finite Morse index solutions and as a consequence, we show property (C) and property (D).  



\section{Preliminaries}
In this section, we introduce basic properties for nonlinearities satisfying \eqref{asf2}. We first compute $\mathcal{A}$ for typical nonlinearities.
\begin{example}
\label{ugu}
Let $0<p_1<\infty$, $p_2<p_1$, $k\in \mathbb{R}$ and
$\mathcal{A}$ be that in \eqref{asf2}. Then, we get the following.
\begin{itemize}
    \item[{\rm{(i)}}] $\mathcal{A}= 1/p_1$ if $f(u)=u^k e^{u^{p_1}}$.
    \item[{\rm{(ii)}}] $\mathcal{A}= 1/p_1$ if $f(u)=e^{u^{p_1}}+e^{u^{p_2}}$ or $f(u)=e^{u^{p_1}+u^{p_2}}$.
    \item[{\rm{(iii)}}] $\mathcal{A}=1/p_1$ if $f(u)=e^{u^{p_1}(\log u)^k}$.
    \item[{\rm{(iv)}}] $\mathcal{A}=0$ if $f(u)=e^{e^{u^{p_1}}-ku}$ or $e^{e^{e^{u^{p_1}}}}$.
\end{itemize}
\end{example}
\begin{proof}
We set $f(u)=u^k e^{u^{p_1}}$ and $\hat{p}=(k-2p_1-1)(k-p_1+1)p{_1}^{-2}$. Then,
for all $u$ sufficiently large, we get 
\begin{align*}
F(u)&= C+\int_{1}^{u} s^{k} e^{s^{p_1}}\,ds=C+\frac{u^{k-p_1+1}}{p_1}e^{u^{p_1}}-\frac{k-p_1+1}{p_1}\int_{1}^{u} s^{k-p_1}e^{s^{p_1}}\,ds\\
&= C+\frac{u^{k-p_1+1}}{p_1}e^{u^{p_1}}-\frac{k-p_1+1}{{p_1}^2}u^{k-2p_1+1}e^{u^{p_1}}+\hat{p}\int_{1}^{u}s^{k-2p_1}e^{s^{p_1}}\,ds,
\end{align*}
where $C>0$ is a positive constant depending only on $f$.
Here, we have 
\begin{align*}
    0\le \int_{1}^{u}s^{k-2p_1}e^{s^{p_1}}\,ds&\le C+\frac{u^{k-3p_1 +1}}{p_1}e^{u^{p_1}}-\frac{k-3p_1+1}{p_1}\int_{1}^{u}s^{k-3p_1}e^{s^{p_1}}\,ds\\
    &\le C+Cu^{k-3p_1+1}e^{u^{p_1}},
\end{align*}
where $C>0$ is a positive constant depending only on $f$. Moreover, we see that
\begin{equation*}
\left(\frac{F(u)\log F(u)}{f(u)}\right)'=1+\log F(u) \left(1-\frac{F(u)f'(u)}{f^2(u)}\right).
\end{equation*}
Therefore, we get (i) by a direct calculation. We remark that (ii), (iii) and (iv) can be obtained by using the methods similar to the proof of (i).
\end{proof}
Next, we prove some basic properties and thus show that the condition \eqref{asf2} implies the supercriticality of $f$ in the sense of the Trudinger-Morser imbedding.
\label{basicsec}
\begin{lemma}
\label{basiclem}
We assume that $f$ satisfies $\eqref{asf1}$ and $\eqref{asf0}$. In addition, we suppose that there exist $u_0>0$, $0<p<\frac{1}{2}$ such that $F(u_0)>1$ and  
\begin{equation}
\label{yoiass}
\left(\frac{F(u)\log F(u)}{f(u)}\right)'<p  \hspace{6mm} \text{for all $u\ge u_0$}.
\end{equation}
Then, we have
\begin{equation}
\label{supercritical}
    \frac{F(u)\log F(u)}{f(u)u}\le p+ \frac{C_1}{u} ,\hspace{8mm} F(u)\ge e^{{(c_1 u)}^{1/p}}\hspace{6mm}\text{for all $u\ge u_0$},
\end{equation}

\begin{equation}
\label{weakassum}
\left(\frac{F(u)}{f(u)}\right)'\le 0 \hspace{6mm} \text{for $u\ge u_0$},
\end{equation}
and 
\begin{equation*}
    f(u)\ge C_2 e^{{(c_1 u)}^{1/p}} \hspace{6mm}\text{for all $u\ge u_0$},
\end{equation*}
where $c_1$, $C_1$ and $C_2$ are positive constants depending only on $u_0$, $p$, and $f$.
\end{lemma}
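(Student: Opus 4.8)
The plan is to reduce all four assertions to a single first-order differential inequality for $F$. Throughout, write $G(u):=F(u)\log F(u)/f(u)$, the quantity appearing in \eqref{yoiass}. I would begin by recording the elementary facts that make $G$ well behaved on $[u_0,\infty)$: since $F(u_0)>1$ and $f\ge 0$ by \eqref{asf1}, $f$ cannot vanish identically on $[0,u_0]$, hence $f(u_0)>0$; then $f'\ge 0$ forces $f(u)\ge f(u_0)>0$ and $F(u)\ge F(u_0)>1$, so $\log F(u)>0$, for every $u\ge u_0$. Since $f$ (hence $F$) is analytic and $F>1>0$ on $[u_0,\infty)$, the function $G$ is real-analytic, in particular $C^1$, and positive there.

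The algebraic heart of the argument is the identity
\[
G'(u)=1+\log F(u)\left(1-\frac{F(u)f'(u)}{f(u)^2}\right)=1+\log F(u)\left(\frac{F(u)}{f(u)}\right)',
\]
obtained by a direct computation exactly as in the proof of Example \ref{ugu} (using $F'=f$). Combining it with the hypothesis \eqref{yoiass} and $\log F(u)>0$ gives $\log F(u)\,(F/f)'(u)<p-1<0$, whence $(F/f)'(u)<(p-1)/\log F(u)\le 0$ on $[u_0,\infty)$; this is \eqref{weakassum}.

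For the two estimates in \eqref{supercritical}, I would integrate \eqref{yoiass} over $[u_0,u]$ to get $G(u)<pu+(G(u_0)-pu_0)\le pu+C_1$ with $C_1:=\max\{1,\,G(u_0)-pu_0\}>0$; dividing by $u$ gives the first inequality of \eqref{supercritical}. Next, since $f=F'$, the bound $G(u)\le pu+C_1$ is equivalent to
\[
\bigl(\log\log F\bigr)'(u)=\frac{F'(u)}{F(u)\log F(u)}\ge\frac{1}{pu+C_1},\qquad u\ge u_0,
\]
which makes sense because $F(u)>1$ on $[u_0,\infty)$. Integrating from $u_0$ to $u$ and exponentiating twice yields $\log F(u)\ge(\log F(u_0))\bigl((pu+C_1)/(pu_0+C_1)\bigr)^{1/p}$, and since $C_1>0$ and $x\mapsto x^{1/p}$ is increasing, the right-hand side dominates $(c_1u)^{1/p}$ with $c_1:=p\,(\log F(u_0))^p/(pu_0+C_1)$, a constant depending only on $u_0$, $p$, $f$; this is the second inequality of \eqref{supercritical}. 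Finally, \eqref{weakassum} says $F/f$ is non-increasing on $[u_0,\infty)$, so $f(u)\ge(f(u_0)/F(u_0))\,F(u)\ge C_2e^{(c_1u)^{1/p}}$ with $C_2:=f(u_0)/F(u_0)$, which is the last assertion.

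The one point that requires care is the bookkeeping in the double integration: one must verify that the multiplicative constant coming from $\bigl((pu+C_1)/(pu_0+C_1)\bigr)^{1/p}$ can indeed be absorbed into the clean form $(c_1u)^{1/p}$ uniformly for $u\ge u_0$ — this is where $p>0$ (so that $t\mapsto t^{1/p}$ is monotone) and the positivity of $C_1$ enter. Everything else is routine calculus once the identity for $G'$ is in hand.
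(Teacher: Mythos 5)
Your proposal is correct and follows essentially the same route as the paper: integrate \eqref{yoiass} to get the linear bound on $F\log F/f$, turn it into the differential inequality $(\log\log F)'\ge (pu+C_1)^{-1}$ and integrate to get the lower bound on $F$, read \eqref{weakassum} off the identity $G'=1+\log F\,(F/f)'$ together with $\log F>0$, and then use the monotonicity of $F/f$ for the bound on $f$. Your explicit bookkeeping of $c_1$, $C_1$, $C_2$ just makes precise what the paper leaves implicit.
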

\begin{remark}
\label{basicrem}
\rm{The assumption \eqref{yoiass} is equivalent to \eqref{asf2}. Indeed, it is easy to see that if $f$ satisfies \eqref{asf1}, \eqref{asf0} and \eqref{asf2}, then \eqref{yoiass} holds for some large $u_0 > 0$.}
\end{remark}

\begin{proof}[Proof of Lemma \ref{basiclem}]
Let $u\ge u_0$. Then, by integrating
\eqref{yoiass} on $[u_0,u]$, we have 
\begin{equation*}
    \frac{F(u)\log F(u)}{f(u)}\le pu+ \frac{F(u_0)\log F(u_0)}{f(u_0)}-pu_0\le pu+C_1\hspace{6mm} \text{for all $u\ge u_0$,}
\end{equation*}
where $C_1$ is depending only on $u_0$. It implies
\begin{equation}
\label{yoiyoi}
\frac{f(u)}{F(u)\log F(u)}\ge \frac{1}{pu+C_1} \hspace{6mm} \text{for all $u\ge u_0$}.
\end{equation}
Integrating
\eqref{yoiyoi} on $[u_0,u]$, we see that
\begin{equation*}
    \int_{u_0}^{u} \frac{f(s)}{F(s)\log F(s)}\,ds\ge \int_{u_0}^{u}\frac{1}{ps+C_1}\,ds.
\end{equation*}
Therefore, we obtain 
\begin{equation*}
   \log \log F(u)-\log \log F(u_0)\ge p^{-1}(\log (pu+C_1)-\log (pu_0+C_1)) \hspace{4mm} \text{for all $u\ge u_0$}.
\end{equation*}
It implies that the inequality \eqref{supercritical} follows for some constants $c_1$ depending only on $u_0$, $p$ and $f$.

On the other hand, from \eqref{yoiass}, we have
\begin{equation*}
1+\log F(u) \left(1-\frac{F(u)f'(u)}{f^2(u)}\right)<p<1 \hspace{6mm}\text{for all $u\ge u_0$}.
    \end{equation*}
Hence, it holds
    \begin{equation*}
   \left( \frac{F(u)}{f(u)}\right)'= 1-\frac{F(u)f'(u)}{f^2(u)}<0 \hspace{6mm}\text{for all $u\ge u_0$}.
    \end{equation*}
    Thus, we have \eqref{weakassum}. In particular, since $F/f$ is nonincreasing, it follows from \eqref{supercritical} that
    \begin{equation*}
     e^{(c_1  u)^{1/p}}\le    F(u)\le \frac{F(u_0)}{f(u_0)}f(u) \hspace{6mm}\text{for all $u\ge u_0$}.
    \end{equation*}
Thus, we get the result.
\end{proof}

\subsection{Some notations}
\label{notesec}
In this subsection, we introduce a useful change of variables and fix some notations. As mentioned in the introduction, the set of solutions of \eqref{gelfand} is represented by $\{ (\lambda(\alpha), u(r, \alpha)); \alpha>0\}$ with $\alpha:=\lVert u \rVert_{L^{\infty}(B_1)}=u(0)$. Moreover, we fix a value $\lambda^{*}>0$ depending only on $f$ such that no solution exists for $\lambda>\lambda^{*}$. For each solution $u(r,\alpha)$, we denote by $v=v(r,\alpha):=u(\frac{r}{\sqrt{\lambda(\alpha)}},\alpha)$.
Then, $v$ satisfies
\begin{equation}
\label{eqv}
\left\{
\begin{alignedat}{4}
&v''+\frac{v'}{r}+f(v)=0, \hspace{14mm}0<r\le \sqrt{\lambda(\alpha)},\\
&v(0)=\alpha, \hspace{2mm} v'(0)=0, \hspace{2mm} v\left(\sqrt{\lambda(\alpha)}\right)=0.
\end{alignedat}
\right.
\end{equation}
Here, we extend $f$ to $\mathbb{R}$ such that $f$ is locally Lipschitz, nonnegative, nondecreasing, and convex. 

In the following, we fix the notations $\lambda(\alpha)$, $u(r,\alpha)$, $\lambda^{*}$, $v(r,\alpha)$ and $f$ in the above sense. 
\section{A non-existence result}
\label{pohosec}
In this section, we consider the solutions
of the equation 
\begin{equation}
\label{eqf(v)}
v'' + \frac{v'}{r} +f(v)=0 \hspace{6mm}\text{for $r>0$.}
\end{equation}
We first introduce a basic property for the equation \eqref{eqf(v)}. 
\begin{lemma}[see \cite{Mi2020}]
\label{minusgradlem}
Let $v\in C^2[0,\infty)$ be a solution of \eqref{eqf(v)}. 
Then, we have  
\begin{equation*}
    v'(r)\le 0\hspace{4mm}\text{for all $r>0$}. 
\end{equation*}
\end{lemma}
\begin{proof}
Argue by contradiction, we assume that there exists $r_0>0$ such that $v'(r_0)>0$. Since
$rv'$ is nonincreasing, we get 
$v'(r)\ge r_0v'(r_0)r^{-1}$ for $r<r_0$ and thus we obtain $v(r)\le v(r_0)-r_0v'(r_0)\log(r_0/r)\to -\infty$ as $r\to 0$. It contradicts $v\in C^2[0,\infty)$.
\end{proof}
Next, we introduce the key identity \eqref{POHO} obtained in \cite{tang}. As mentioned in the introduction, the identity plays an important role in the proof of the main theorems. 
\begin{lemma}
\label{poholem}
We impose \eqref{asf1} and \eqref{asf0}. Let $v\in C^2[0,\infty)$ be a solution of \eqref{eqf(v)}. Then, we have
\begin{equation}
\label{POHO}
G'(r)=\frac{d}{dv}\left(\frac{F(v)}{f(v)}\right)r{v'}^2,
\end{equation}
where
\begin{equation*}
G(r):=\frac{r^2}{2}F(v(r))+\frac{r^2}{4}{v'}^2(r)+\frac{F(v(r))}{f(v(r))}rv'(r).
\end{equation*}
In addition, we assume that there exists some $u_0>0$ such that $F(u_0)>1$, $v(0)>u_0$, and \eqref{weakassum} hold.
Then, we have $G(r)\le 0$ if $v(r)\ge u_0$. In particular, it holds
\begin{equation}
\label{poho}
    -rv'(r)\le \frac{4F(v(r))}{f(v(r))} \hspace{4mm}\text{if $v(r)\ge u_0$}.
\end{equation}
\end{lemma}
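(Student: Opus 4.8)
The plan is to verify the identity \eqref{POHO} by direct differentiation, and then use it together with the hypothesis \eqref{weakassum} to control the sign of $G$. For the first part, I would differentiate
\[
G(r)=\frac{r^2}{2}F(v)+\frac{r^2}{4}{v'}^2+\frac{F(v)}{f(v)}rv'
\]
term by term, using the equation \eqref{eqf(v)} in the form $r v'' = -v' - r f(v)$ to eliminate all occurrences of $v''$. The first term contributes $rF(v)+\frac{r^2}{2}f(v)v'$; the second contributes $\frac{r}{2}{v'}^2+\frac{r^2}{2}v'v''=\frac{r}{2}{v'}^2+\frac{1}{2}v'(rv'')=\frac{r}{2}{v'}^2-\frac{1}{2}{v'}^2-\frac{r}{2}f(v){v'}^2\cdot\frac{r}{1}$; wait, more carefully $\frac{r^2}{2}v'v'' = \frac{r}{2}v'(rv'') = \frac{r}{2}v'(-v'-rf(v)) = -\frac{r}{2}{v'}^2-\frac{r^2}{2}f(v)v'$. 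The third term contributes $\left(\frac{F(v)}{f(v)}\right)' v'\, rv' + \frac{F(v)}{f(v)}v' + \frac{F(v)}{f(v)}r v''$, and again $\frac{F(v)}{f(v)}rv'' = \frac{F(v)}{f(v)}(-v'-rf(v)) = -\frac{F(v)}{f(v)}v' - rF(v)$. Collecting: the $rF(v)$ from the first term cancels the $-rF(v)$ from the third; the $\frac{r^2}{2}f(v)v'$ from the first cancels the $-\frac{r^2}{2}f(v)v'$ from the second; the $\frac{r}{2}{v'}^2$ and $-\frac{r}{2}{v'}^2$ cancel; and $\frac{F(v)}{f(v)}v' - \frac{F(v)}{f(v)}v' = 0$. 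What survives is exactly $\left(\frac{F(v)}{f(v)}\right)' r {v'}^2 = \frac{d}{dv}\left(\frac{F(v)}{f(v)}\right) r {v'}^2$, which is \eqref{POHO}.

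For the sign statement, I would argue as follows. Let $r_0:=\sup\{r>0 : v(r)>u_0\}$; since $v(0)>u_0$ and $v$ is continuous, $r_0>0$, and on $[0,r_0)$ we have $v(r)>u_0$, so by \eqref{weakassum} the right-hand side of \eqref{POHO} is $\le 0$ (note $r{v'}^2\ge 0$ and $\frac{d}{dv}(F/f)\le 0$ on $\{v>u_0\}$), hence $G$ is nonincreasing on $[0,r_0)$. Since $G(0)=0$ (every term carries a factor $r^2$ or $rv'$ and $v'(0)=0$), we conclude $G(r)\le 0$ for $r\in[0,r_0)$, and by continuity for $r=r_0$ too, i.e. $G(r)\le 0$ whenever $v(r)\ge u_0$. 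Then at such $r$, discarding the nonnegative term $\frac{r^2}{2}F(v)$ and writing the remaining quadratic in $v'$:
\[
\frac{r^2}{4}{v'}^2+\frac{F(v)}{f(v)}rv'\le -\frac{r^2}{2}F(v)\le 0,
\]
so $\frac{r}{4}v'\bigl(rv'+\frac{4F(v)}{f(v)}\bigr)\le 0$. Since $v'\le 0$ by Lemma \ref{minusgradlem} and $r>0$, the factor $\frac{r}{4}v'\le 0$, so we need $rv'+\frac{4F(v)}{f(v)}\ge 0$, which is precisely \eqref{poho}.

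I do not anticipate a serious obstacle here: the identity is a mechanical computation and the sign argument is a standard "$G(0)=0$ and $G$ nonincreasing" monotonicity trick. The one point requiring a little care is the passage from $G(r)\le 0$ to \eqref{poho}: one must remember to invoke Lemma \ref{minusgradlem} to fix the sign of $v'$ before dividing, since the quadratic inequality $\frac{r^2}{4}X^2+\frac{F}{f}rX\le 0$ alone only gives $-\frac{4F}{f}\le rX\le 0$, and it is the lower bound that is \eqref{poho}. A secondary subtlety is the domain of validity: \eqref{POHO} holds wherever $v$ is a $C^2$ solution, but the conclusion $G\le 0$ and \eqref{poho} are asserted only on the set $\{v\ge u_0\}$, which by monotonicity of $v$ is an interval $[0,r_0]$ containing $0$; this is exactly what makes the $G(0)=0$ initialization legitimate. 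If instead $v$ were defined only on some annulus not containing the origin one would lose the base point, but that is not the situation in the statement.
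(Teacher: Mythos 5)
Your proposal is correct and follows essentially the same route as the paper: differentiate $G$ term by term, use the ODE in the form $rv''+v'=-rf(v)$ to produce the cancellations giving \eqref{POHO}, then combine \eqref{weakassum} with $G(0)=0$ and the monotonicity of $v$ (Lemma \ref{minusgradlem}) to get $G\le 0$ on $\{v\ge u_0\}$, and finally read \eqref{poho} off the resulting quadratic inequality in $rv'$. The only differences are presentational (your explicit $r_0$ and the remark that the quadratic inequality alone already yields $-4F/f\le rv'\le 0$), so nothing further is needed.
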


\begin{proof}
By a direct calculation, we get
\begin{equation*}
G'(r)=rF(v(r))+\frac{r^2}{2}f(v)v'+\frac{rv'}{2}(v'+rv'')+\frac{d}{dv}\left(\frac{F(v)}{f(v)}\right)r{v'}^2+\frac{F(v)}{f(v)}(rv')'.
\end{equation*}
Since $v$ satisfies \eqref{eqf(v)}, we obtain 
\begin{equation*}
    (rv')'=rv''+v'=-rf(v).
\end{equation*}
Therefore, it follows from \eqref{weakassum} that 
\begin{equation*}
    G'(r)=\frac{d}{dv}\left(\frac{F(v)}{f(v)}\right)r{v'}^2\le 0
\end{equation*}
provided $v(r)>u_0$. Thus, by Lemma \ref{minusgradlem}, we get $G(r)\le G(0)=0$ provided $v(r)>u_0$.
\end{proof}
Lemma \ref{poholem} implies that the identity \eqref{POHO} ensures the gradient estimate \eqref{poho}. In addition, the interaction between the gradient estimate \eqref{poho} and the supercriticality of $f$ works effectively in proving Theorem \ref{mainthms} (A). In fact, as a result of Lemma \ref{poholem}, we obtain the following key proposition.
\begin{proposition}
\label{superprop}
We suppose that $f$ satisfies \eqref{asf1} and \eqref{asf0}. In addition, we assume that \eqref{yoiass} and $F(u_0)>1$ hold for some $u_0>0$ and $0<p<\frac{1}{2}$. Then, there exist $\gamma>0$ and $\beta>u_0$ such that we have the following assertion. For every $B\ge \beta$, there exists $r_B>0$ such that if $v\in C^2[0,\infty)$ is a solution of \eqref{eqf(v)} satisfying $v(0)>\gamma B$, then $v(r_B)\ge B$.
\end{proposition}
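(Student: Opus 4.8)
The plan is to use the gradient estimate \eqref{poho} as a differential inequality controlling how fast a solution of \eqref{eqf(v)} can decay, and to exploit the super-exponential growth of $f$ guaranteed by Lemma \ref{basiclem} to show that a very large initial value $v(0)$ forces $v$ to still be large at a fixed radius $r_B$ independent of the particular solution. Fix $u_0$ and $p$ as in the hypothesis, and let $c_1, C_1$ be the constants produced by Lemma \ref{basiclem}, so that $F/f$ is nonincreasing on $[u_0,\infty)$, $F(u)\ge e^{(c_1u)^{1/p}}$ there, and \eqref{weakassum} holds. First I would observe that as long as $v(r)\ge u_0$ we may apply \eqref{poho}, which after dividing by $r$ reads $-v'(r)\le \tfrac{4}{r}\cdot\tfrac{F(v)}{f(v)}$. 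Since $F/f$ is nonincreasing, on the set where $v(r)\ge B\ge\beta$ for a threshold $\beta\ge u_0$ to be chosen, we get the cleaner bound $-v'(r)\le \tfrac{4}{r}\cdot\tfrac{F(B)}{f(B)}=:\tfrac{4}{r}\,M(B)$, so that $v$ decreases at most logarithmically: integrating from $r_B$ outward (or from a small radius where $v$ is still enormous), $v(r_1)-v(r_2)\le 4M(B)\log(r_2/r_1)$ on any interval where $v\ge B$.

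The core of the argument is then a contradiction/continuation scheme. Suppose, given $B\ge\beta$, that for some solution $v$ with $v(0)>\gamma B$ (with $\gamma$ to be fixed) we had $v(r_B)<B$; let $\rho\in(0,r_B)$ be the first radius at which $v(\rho)=B$. On $(0,\rho)$ we have $v\ge B\ge u_0$, hence \eqref{poho} applies throughout and $-v'(r)\le \tfrac{4}{r}M(B)$ there. But near $r=0$ the solution is regular with $v'(0)=0$, so in fact I would instead integrate this inequality on $(\varepsilon,\rho)$ and control the behavior on $(0,\varepsilon)$ separately: from \eqref{eqf(v)}, $(rv')'=-rf(v)\le 0$, and near $0$, $v(r)\ge v(0)-\tfrac14 f(v(0))r^2\cdot$(a harmless constant), so on a small enough radius $\varepsilon_0$ depending only on how large $v(0)$ is we retain $v\ge \tfrac12 v(0)$. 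Combining, $B=v(\rho)\ge v(\varepsilon_0)-4M(B)\log(\rho/\varepsilon_0)\ge \tfrac12 v(0)-4M(B)\log(r_B/\varepsilon_0)$. Now choose $r_B$ so that $\log(r_B/\varepsilon_0)$ is controlled; the decisive point is that, by Lemma \ref{basiclem}, $M(B)=F(B)/f(B)\le F(u_0)/f(u_0)$ is \emph{bounded} (indeed $F/f$ is nonincreasing), so $4M(B)\log(r_B/\varepsilon_0)$ is at most an absolute constant times $\log(1/\varepsilon_0)$, while $\varepsilon_0$ itself depends only on $f(v(0))/v(0)$, which for $v(0)>\gamma B$ is itself super-exponentially large — making $\log(1/\varepsilon_0)$ grow only like a power of $v(0)^{1/p}$, i.e. much slower than $v(0)$ itself. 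Hence for $\gamma$ large and $\beta$ large the inequality $B\ge \tfrac12 v(0)-(\text{small compared to }v(0))$ contradicts $v(0)>\gamma B$ once $\gamma>3$, say. This yields $v(r_B)\ge B$, as required.

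The main obstacle I anticipate is bookkeeping the two competing scales correctly: the radius $\varepsilon_0$ on which $v$ stays comparable to $v(0)$ shrinks as $v(0)\to\infty$ (because $f(v(0))$ blows up super-exponentially), which \emph{worsens} the logarithmic loss $M(B)\log(r_B/\varepsilon_0)$; the whole argument works only because that loss is merely logarithmic in $1/\varepsilon_0$ and $\log(1/\varepsilon_0)$ grows like $v(0)^{1/p}\ll v(0)$, whereas the gain $\tfrac12 v(0)$ is linear. Making this quantitative — choosing $\beta$ so that $F(\beta)>1$ and the boundedness of $F/f$ kicks in, choosing $r_B$ (which may be taken decreasing in $B$, or even a fixed small constant), and choosing $\gamma$ uniformly — is the delicate part, but it is elementary calculus once the estimates above are in place. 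I would also double-check that I only ever invoke \eqref{poho} on radii where $v\ge u_0$, which is automatic on $(0,\rho)$ since there $v\ge B\ge \beta\ge u_0$.
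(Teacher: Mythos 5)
Your scheme has a genuine gap, and it sits exactly at the point you flag as "the delicate part": the two competing scales do \emph{not} balance the way you claim. Since $0<p<\tfrac12$, Lemma \ref{basiclem} gives $f(u)\ge C_2e^{(c_1u)^{1/p}}$ with $1/p>2$, so the radius $\varepsilon_0\sim\sqrt{v(0)/f(v(0))}$ on which $v$ stays comparable to $v(0)$ satisfies $\log(1/\varepsilon_0)\gtrsim \tfrac12(c_1v(0))^{1/p}$, which grows \emph{much faster} than $v(0)$, not slower (you appear to have read $v(0)^{1/p}\ll v(0)$, which would require $p>1$). Worse, the hypotheses give no upper bound on $f$ at all (e.g.\ $f(u)=e^{e^u}$ is admissible, with $\mathcal{A}=0$), so $\log(1/\varepsilon_0)\sim\tfrac12\log f(v(0))$ can be of order $e^{v(0)}$. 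Consequently your key inequality $\tfrac12 v(0)-B\le 4M(B)\log(r_B/\varepsilon_0)$ is simply \emph{true} for large $v(0)$ (the right-hand side dominates), and no contradiction is obtained. Replacing the crude constant $M(B)$ by the pointwise bound $F(v)/f(v)\le (pv+C_1)/(c_1v)^{1/p}$ from \eqref{supercritical} and integrating the resulting separable inequality does not rescue the argument either: one gets an upper bound of order $(v(0)/2)^{1/p}$ on the left against $4\cdot\tfrac12\log f(v(0))\ge 2(c_1v(0))^{1/p}$ on the right, and because of the factor $4$ in \eqref{poho} and the loss from $v(0)/2$ the inequality is again consistent, at least for $f$ with near-minimal growth. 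So the failure is structural, not bookkeeping: any estimate that measures the drop of $v$ from a radius $\varepsilon_0$ tied to the (uncontrolled) size of $f(v(0))$ cannot work.

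The paper avoids this by never going near the center. It argues by contradiction on the interval between $r_1$ (where $v=\gamma B$) and $r_2$ (where $v=B$), both inside $(0,r_B)$, and splits into two cases according to whether $r_1v'(r_1)\log r_1\le(1-\delta_1)v(r_1)$. In the hard case it chooses $r_*<r_1$ where equality in this weighted-gradient condition first holds, and the decisive step is to convert \eqref{poho} plus \eqref{supercritical} into the \emph{pointwise in $r$} bound $F(v(r))\le r^{-2+\delta_0}$ on $[r_*,r_1]$, where $\delta_0=2-4p(1+\delta_2)(1-\delta_1)^{-1}>0$ — this is precisely where $p<\tfrac12$ (i.e.\ $4p<2$) is used, and it makes no reference to $v(0)$ or to the size of $f$ near $v(0)$. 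An integration by parts of $\int s\log s\,f(v)\,ds$ with the weight $s^2(\log s)^2$ then shows $v(r_*)\le v(r_1)+1$, after which an elementary integration of $(rv')'=-rf(v)$ on $(r_*,r_2)$ together with the smallness condition $f(\gamma B+1)r_B^2\le\delta_1\gamma B$ yields the contradiction with $\gamma>2\delta_1^{-1}$. If you want to repair your proof, you would need to reproduce some version of this mechanism (a bound on $F(v(r))$ in terms of $r$ alone, valid away from the concentration zone), rather than comparing $v$ at a fixed radius with its value near the origin; note also that $r_B$ must be allowed to depend on $B$, as it does in the paper through the condition $f(\gamma B+1)r_B^2\le\delta_1\gamma B$.
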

We explain the sketch of the proof. We take $\gamma$, $\beta$, $B$, $u_0$ and $r_B$ appropriately and argue by contradiction that there exist $0<r_1<r_2<r_B$ such that $v(r_1)=\gamma B$ and $v(r_2)=B$. We point out that when it satisfies $r_1 v'(r_1)\log r_1\le (1-\delta_1)v(r_1)$ for a small $\delta_1$, then by an ODE method, we have a contradiction. On the other hand, when it holds $r_1 v'(r_1)\log r_1> (1-\delta_1)v(r_1)$, we take $r_{*}<r_1$ such that \eqref{pitari} and \eqref{ookii} hold. Then, it follows from the gradient estimate \eqref{poho} and the supercriticality of $f$ that $F(v)\le r^{-2+\delta_0}$ for some $\delta_0 >0 $ if $r_* <r< r_1$. The fact ensures that the difference between $v(r_*)$ and $v(r_1)$ is not very large. As a result, by the ODE argument, we get a contradiction.
\begin{proof}
Let $\delta_1>0$ and $\delta_2>0$ be small constants
so that
\begin{equation*}
 \delta_0:=2-4p(1+\delta_2)(1-\delta_1)^{-1}>0.
\end{equation*}
Next, let $\gamma>2\delta_{1}^{-1}$ and $\beta>u_0$ such that 
\begin{equation}
\label{roughcondi}
    \frac{C_1}{u}<p \delta_2 \hspace{6mm} \text{for $u\ge \beta$},
 \end{equation}
where $C_1$ is that in Lemma \ref{basiclem}.
In addition, we take a small constant $0<r_B<1$ satisfying 
\begin{equation}
\label{rhcondi1}
r^{\delta_0}(\log r)^2+ 2\int_{0}^{r}s^{-1+\delta_0}
|\log s|(1+|\log s|)
\,ds< \beta(1-\delta_1)\delta_1, \hspace{2mm} \text{ $0<r<r_B$}
\end{equation}
and 
\begin{equation}
\label{rhcondi2}
f(\gamma B+1)r_B^2\le \delta_1 \gamma B.
\end{equation}
Now, we prove that the assertion of Proposition \ref{superprop}
holds for $r_B$ above. We recall that thanks to Lemma \ref{basiclem}, it holds \eqref{weakassum} and thus it holds \eqref{poho}. Moreover, by Lemma \ref{minusgradlem}, it holds $v'\le 0$.
Then, we argue by contradiction and assume that there exist $0<r_1<r_2<r_B$ such that $v(r_1)=\gamma B$ and $v(r_2)=B$. 

\vspace{10pt}
\noindent
\textbf{Case 1.} $r_1 v'(r_1)\log r_1\le (1-\delta_1)v(r_1)$ holds.

In this case, we start from this identity
\begin{equation}
\label{daiji}
    (-rv')'=-r\Delta v=rf(v) \hspace{6mm} \text{for all $r>0$}.
\end{equation}
Fixing $r_1<t<r_2$ and integrating the above on $(r_1, t)$, we get
\begin{equation*}
    -t v'(t)=-r_1 v'(r_1)+\int_{r_1}^{t} sf(v)\,ds\le -r_1v'(r_1)+\frac{t^2}{2}f(\gamma B)
\end{equation*}
by Lemma \ref{minusgradlem} and the fact that $f'\ge 0$. This inequality implies
\begin{equation*}
    -v'(t)\le -\frac{r_1}{t}v'(r_1)+\frac{t}{2}f(\gamma B).
\end{equation*}
Integrating the above on $(r_1,r_2)$ and using \eqref{rhcondi2} and \eqref{asf1}, it holds
\begin{equation*}
    v(r_1)-v(r_2)\le -r_1 v'(r_1)\log(r_2/r_1)+\frac{r_2^2}{2}f(\gamma B)\le r_1v'(r_1)\log r_1 + \frac{\delta_1}{2}\gamma B.
\end{equation*}
Therefore, thanks to the assumption of Case $1$, we get
\begin{equation*}
  \frac{1}{2}(\delta_1 \gamma-2)B\le 0.
\end{equation*}
It contradicts $\gamma>2\delta_{1}^{-1}$.

\vspace{10pt}
\noindent
\textbf{Case 2.} $r_1\log r_1 v'(r_1)>(1-\delta_1)v(r_1)$ holds.

In this case, we take $0<r_*<r_1$ such that
\begin{equation}
\label{pitari}
    r_* \log r_{*} v'(r_{*})=(1-\delta_1)v(r_{*}) 
\end{equation}
and 
\begin{equation}
\label{ookii}
    r\log r v'(r)>(1-\delta_1)v(r) \hspace{6mm}\text{for all $r_*<r\le r_1$}.
\end{equation}
We recall that \eqref{weakassum} holds.
Thus, it follows from \eqref{pitari}, \eqref{ookii}, and Lemma \ref{poholem} that
\begin{equation*}
    (1-\delta_1)v(r) \le rv'(r)\log r \le -\frac{4F(v(r))}{f(v(r))}\log r \hspace{6mm} \text{for $r_*\le r\le r_{1}$}
\end{equation*}
On the other hand, thanks to Lemma \ref{basiclem} and the condition \eqref{roughcondi}, we get
\begin{equation*}
    \frac{F(v(r))\log F(v(r))}{f(v)v(r)}\le (1+\delta_2)p \hspace{6mm}\text{for all $r_{*}\le r\le r_1$}.
\end{equation*}
By the two inequalities above, we have
\begin{equation}
\label{estimateF}
F(v(r))=\exp\left(\frac{f(v)v}{F(v)}\cdot \frac{F(v)\log F(v)}{f(v)v}\right)\le r^{-4p(1+\delta_2)(1-\delta_1)^{-1}}=r^{-2+\delta_0}
\end{equation}
for all $r_*\le r\le r_1$. 

We now consider the identity 
\begin{equation*}
(r\log r v')'=r\log r\Delta v + v'= -r\log r f(v)+v'.     
\end{equation*}
Integrating the above on $(r_*, r_1)$, it holds
\begin{align*}
r_1\log r_1 v'(r_1)-r_{*}\log r_{*}  v'(r_{*})=-\int_{r_{*}}^{r_1}s\log s f(v)\,ds+v(r_1)-v(r_{*}).
\end{align*}
Thanks to \eqref{pitari} and 
\eqref{ookii}, we obtain
\begin{align*}
 \delta_1&(v(r_*)-v(r_1))= v(r_{*})-v(r_1)+(1-\delta_1)(v(r_1)-v(r_*))
 \\
 &\le v(r_{*})-v(r_1)+r_1\log r_1 v'(r_1)-r_{*}\log r_{*} v'(r_{*})
 = -\int_{r_{*}}^{r_1}s\log s f(v)\,ds.
\end{align*}
Then, it follows from 
\eqref{ookii} that
\begin{align*}
-\int_{r_{*}}^{r_1}s\log s f(v)\,ds&= -\int_{r_{*}}^{r_1}s\log s (v')^{-1}(f(v)v')\,ds\notag\\
&\le -(1-\delta_{1})^{-1}\int_{r_{*}}^{r_1}s^2(\log s)^2 v^{-1} f(v)v'\,ds\\
&\le -\left(\beta(1-\delta_{1})\right)^{-1}\int_{r_{*}}^{r_1}s^2(\log s)^2 f(v)v'\,ds.
\end{align*}
Moreover, by using \eqref{estimateF} and \eqref{rhcondi1}, we get
\begin{align*}
-&\int_{r_{*}}^{r_1}s^2(\log s)^2 f(v)v'\,ds\notag\\
&= (r_* \log r_*)^2F(v(r_{*}))
-(r_{1}\log r_1)^2F(v(r_1))+2\int_{r_{*}}^{r_1}s\log s(1+\log s)
F(v)\,ds \\
&\le r_{*}^{\delta_0}(\log r_{*})^2+ 2\int_{r_{*}}^{r_1}s^{-1+\delta_0}
|\log s|(1+|\log s|)
\,ds\le \beta(1-\delta_1)\delta_1.
\end{align*}
Therefore, thanks to the three estimates above, we get 
\begin{equation*}
    v(r_{*})\le 1+v(r_1)=1+\gamma B.
\end{equation*}
Thus, fixing $r_{*}<t<r_2$ and integrating \eqref{daiji} on $(r_{*},t)$, we get 
\begin{equation*}
    -tv'(t)=-r_* v'(r_*)+\int_{r_*}^{t} sf(v)\,ds\le -r_*v'(r_*)+\frac{t^2}{2}f(\gamma B+1)
\end{equation*}
by Lemma \ref{minusgradlem} and the fact that $f'\ge 0$. This inequality implies
\begin{equation*}
    -v'(t)\le -\frac{r_*}{t}v'(r_*)+\frac{t}{2}f(\gamma B+1).
\end{equation*}
Integrating the above on $(r_*,r_2)$ and using \eqref{rhcondi2}, it holds
\begin{equation*}
    v(r_*)-v(r_2)\le -r_* v'(r_*)\log(r_2/r_*)+\frac{r_2^2}{2}f(\gamma B+1)\le (1-\delta_1)v(r_{*})+\frac{\delta_1}{2}\gamma B.
\end{equation*}
Since $v$ is nonincreasing, it holds
\begin{equation*}
   \frac{1}{2} \left(\delta_1 \gamma - 2\right)B\le \delta_1 v(r_{*})-\frac{\delta_{1}}{2}\gamma B-v(r_2)\le 0.
\end{equation*}
It contradicts $\gamma>2\delta_{1}^{-1}$.
\end{proof}
Proposition \ref{superprop} tells us that there is a strictly positive lower bound of the first zeros of $v$ for all $v$ such that $v(0)$ is sufficiently large. In addition, by using the fact and the identity \eqref{POHO}, we prove the following a priori estimate.
\begin{proposition}
\label{goodthm}
We assume that $f$ satisfies \eqref{asf1}, \eqref{asf0} and \eqref{asf2}. Then, there exists $\alpha_0>0$ such that if 
$\alpha>\alpha_0$, then, we have
\begin{equation}
\label{aprioriv}
-v'(r)<C/r \hspace{4mm}\text{and}\hspace{4mm} v(r)\le C(1+|\log r|) 
 \hspace{6mm}\text{for all $0<r<\sqrt{\lambda^{*}+1}$},
\end{equation}
where $C>0$ is a constant independent of $\alpha$, $\lambda^{*}$ is that in Subsection \ref{notesec}, and $v$ is extended to $(0,\infty)$ such that $v\in C^2[0,\infty)$ and $v$ satisfies \eqref{eqf(v)} for all $r>0$. In addition, we get
\begin{equation}
    \label{aprioriu}
-u'(r)\le \frac{C}{r} \hspace{4mm}\text{and}\hspace{4mm} u(r)\le C(1+|\log r|) \hspace{4mm}\text{for all $0<r\le 1$},
\end{equation}
where $C>0$ is a constant independent of $\alpha$.
\end{proposition}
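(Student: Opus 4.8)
The plan is to derive \eqref{aprioriv} by combining Proposition \ref{superprop} with the gradient estimate \eqref{poho}, and then deduce \eqref{aprioriu} by a simple rescaling. First I would fix $u_0$, $p$ so that \eqref{yoiass} and $F(u_0)>1$ hold (possible by Remark \ref{basicrem}), and let $\gamma$, $\beta$ be the constants from Proposition \ref{superprop}. The point of that proposition is that it gives, for each threshold $B\ge\beta$, a \emph{uniform} radius $r_B>0$ such that any solution $v$ of \eqref{eqf(v)} with $v(0)>\gamma B$ stays $\ge B$ on $[0,r_B]$. Choosing $B$ large enough that $\gamma B \ge \alpha_0$ would be backwards, so instead I fix one convenient value, say $B_0 := \max\{\beta, u_0\}$ and set $\alpha_0 := \gamma B_0$; then for $\alpha > \alpha_0$ the solution $v(\cdot,\alpha)$ satisfies $v(r)\ge B_0 \ge u_0$ on $[0, r_{B_0}]$, so \eqref{poho} applies there: $-rv'(r)\le 4F(v(r))/f(v(r))$. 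By \eqref{weakassum} the ratio $F/f$ is nonincreasing in $v$, hence bounded on $\{v \ge B_0\}$ by $F(B_0)/f(B_0) =: C_0$, a constant independent of $\alpha$. This already gives $-v'(r) \le 4C_0/r$ for $0<r<r_{B_0}$.

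The second step is to extend this bound to the full interval $0<r<\sqrt{\lambda^*+1}$. On the region where $v < B_0$ — which, by Lemma \ref{minusgradlem}, is of the form $r > r_{B_0}$, i.e. $r$ bounded away from $0$ — I would integrate \eqref{daiji}, i.e. $(-rv')' = rf(v)$, from $r_{B_0}$ outward; since $v$ is decreasing, $f(v)\le f(\alpha)$ is not uniform, but $f(v) \le f(B_0)$ once $v\le B_0$, wait — more carefully: for $r\ge r_{B_0}$ we have $v(r)\le v(r_{B_0})$, and I need a bound independent of $\alpha$. The cleanest route is: for $r \ge r_{B_0}$, $v(r) \le \max\{B_0, v(r_{B_0})\}$, but $v(r_{B_0})$ could be large. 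So instead I should choose the threshold more cleverly — actually $v(r_{B_0})\le v(r)$ for $r\le r_{B_0}$, and on $[0,r_{B_0}]$ we have the gradient bound; integrating $-v' \le 4C_0/r$ from $r$ to $r_{B_0}$ gives $v(r) \le v(r_{B_0}) + 4C_0\log(r_{B_0}/r)$, which still needs $v(r_{B_0})$ controlled. The resolution: apply Proposition \ref{superprop} is not needed for the outer region; rather, on $r \ge r_{B_0}$ where $v \le B_0$ (by the monotonicity and the definition of $r_{B_0}$ — note $v(r_{B_0}) \ge B_0$ and $v$ is decreasing, so $v \le B_0$ only for $r$ past the point where $v$ crosses $B_0$, call it $\rho \ge r_{B_0}$), I get from $(-rv')'=rf(v)\le rf(B_0)$ that $-rv'(r) \le -\rho v'(\rho) + \tfrac12 f(B_0)(r^2-\rho^2)$, and $-\rho v'(\rho) \le 4C_0$ from the inner estimate evaluated at $\rho$ (where $v(\rho)=B_0\ge u_0$); so $-v'(r) \le 4C_0/r + \tfrac12 f(B_0) r$, giving a bound independent of $\alpha$ on any fixed interval. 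Integrating the gradient bound then yields $v(r) \le C(1+|\log r|)$, since $v(\sqrt{\lambda(\alpha)})=0$ and $\lambda(\alpha) \le \lambda^*$.

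For \eqref{aprioriu}, recall $v(r,\alpha) = u(r/\sqrt{\lambda(\alpha)}, \alpha)$, equivalently $u(r) = v(r\sqrt{\lambda(\alpha)}, \alpha)$. Then $u'(r) = \sqrt{\lambda(\alpha)}\, v'(r\sqrt{\lambda(\alpha)})$, so $-r u'(r) = -(r\sqrt{\lambda(\alpha)})\,v'(r\sqrt{\lambda(\alpha)}) \le C$ provided $r\sqrt{\lambda(\alpha)} < \sqrt{\lambda^*+1}$, which holds for $r\le 1$ since $\lambda(\alpha)\le \lambda^* < \lambda^*+1$. Similarly $u(r) = v(r\sqrt{\lambda(\alpha)}) \le C(1 + |\log(r\sqrt{\lambda(\alpha)})|) \le C(1 + |\log r| + \tfrac12|\log\lambda(\alpha)|)$, and since $0 < \lambda(\alpha) \le \lambda^*$ with $\lambda(\alpha)$ bounded below only along the curve — here I should use that the logarithmic term $|\log\lambda(\alpha)|$ is problematic if $\lambda(\alpha)\to 0$; but Theorem \ref{mainthm1} (or directly the fact that for $\alpha$ large $\lambda(\alpha)$ stays away from $0$) is not yet available, so instead I note $v(r)\le C(1+|\log r|)$ was proved for \emph{all} $0<r<\sqrt{\lambda^*+1}$, so evaluating at $r\sqrt{\lambda(\alpha)} \le \sqrt{\lambda^*} < \sqrt{\lambda^*+1}$ and absorbing the constant $\sqrt{\lambda(\alpha)}\le\sqrt{\lambda^*}$ into $C$ directly: for $r\le 1$, $r\sqrt{\lambda(\alpha)}$ ranges in a set where, hmm, if $\lambda(\alpha)$ is tiny then $r\sqrt{\lambda(\alpha)}$ is tiny and $|\log(r\sqrt{\lambda(\alpha)})|$ is large — but that is fine, because the claim \eqref{aprioriu} is $u(r)\le C(1+|\log r|)$ with $C$ independent of $\alpha$, and $u(r) = v(r\sqrt{\lambda(\alpha)})$; the genuinely careful estimate is $u(r) = v(r\sqrt{\lambda})$ and we need this $\le C(1+|\log r|)$, which requires handling $\lambda\to 0$. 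The honest fix: redo the $v$-estimate to track the dependence, or simply observe $-u'(s)\le C/s$ for $s\le 1$ (just shown) and integrate from $r$ to $1$ using $u(1)=0$: $u(r) = -\int_r^1 u'(s)\,ds \le \int_r^1 (C/s)\,ds = C|\log r|$. This avoids the rescaling pitfall entirely.

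The main obstacle I anticipate is exactly this bookkeeping of constants across the rescaling and across the region $v<u_0$: the gradient estimate \eqref{poho} only holds where $v\ge u_0$, so one must carefully patch the inner estimate (from \eqref{poho}, valid on $[0,\rho]$ where $\rho$ is the $B_0$-level radius, uniformly bounded below by Proposition \ref{superprop}) with the outer estimate (from integrating $(-rv')'=rf(v)$ with $f(v)\le f(B_0)$) and ensure all constants depend only on $f$ and $\lambda^*$, never on $\alpha$. The derivation of the $v$-bound is the substantive part; obtaining \eqref{aprioriu} is then routine, best done by integrating the $u$-gradient bound directly against the boundary condition $u(1)=0$ rather than by naively substituting into the $v$-bound.
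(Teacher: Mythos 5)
Your proposal is correct and follows essentially the same route as the paper: the inner gradient bound comes from \eqref{poho} together with the monotonicity \eqref{weakassum} of $F/f$ on the set where $v\ge u_0$ (your $B_0$-level set), and is patched with a direct integration of $(-rv')'=rf(v)\le rf(B_0)$ on the outer region to get \eqref{aprioriv}, exactly as in the paper. The only (harmless) deviation is the passage to \eqref{aprioriu}: you integrate the $u$-gradient bound against $u(1)=0$, whereas the paper invokes the uniform lower bound $r_\beta\le\sqrt{\lambda(\alpha)}$ supplied by Proposition \ref{superprop} to control the $|\log\lambda(\alpha)|$ term when substituting into the $v$-bound; both arguments are valid.
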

\begin{proof}
Since $f$ satisfies \eqref{asf1}, \eqref{asf0} and \eqref{asf2}, we have $F(u_0)>1$ and 
\eqref{yoiass} for some $u_0>1$ and $0<p<\frac{1}{2}$. We take $\gamma>0$ and $\beta>u_0$ which satisfy the assertion of Proposition \ref{superprop} and we denote by $\alpha_0=\gamma\beta$. We remark that thanks to Lemma \ref{basiclem}, it holds \eqref{weakassum}. Let $v=v(r,\alpha)$ satisfying $\alpha>\alpha_0$.
Then, we recall that $v$ is a solution of \eqref{eqv}.
We point out that we can extend $v$ to $[0,\infty)$ such that $v\in C^2[0,\infty)$ and $v$ satisfies \eqref{eqf(v)} for all $r>0$. Since \eqref{weakassum} holds, by Lemma \ref{poholem}, we have
\begin{equation}
\label{nazo}
    -v'(r)\le \frac{4F(v(r))}{rf(v(r))}\le \frac{4F(u_0)}{f(u_0)}r^{-1}=Cr^{-1}
\end{equation}
provided $v(r)\ge u_0$, where $C>0$ is independent of $v$.

We define $r_v:=\sup\{r:v(r)>u_0$\}. Thanks to Lemma \ref{minusgradlem},
we can verify that $v(r_v)=u_0$ and 
$r_v\le \sqrt{\lambda(\alpha)}\le \sqrt{\lambda^{*}}$. Moreover, thanks to Proposition \ref{superprop}, there exists $r_{\beta}$ independent of $v$ such that $v(r_{\beta})\ge\beta\ge u_0$.
Using Lemma \ref{minusgradlem} again, we deduce that $r_{\beta}\le r_v\le \sqrt{\lambda(\alpha)}$ and \eqref{nazo} holds for $0<r\le r_v$.   
On the other hand, since $v$ satisfies \eqref{eqf(v)}, we obtain for $r_v\le r<\sqrt{\lambda^{*}+1}$,
\begin{align*}
    -r v'(r)= -r_v v'(r_v)+\int_{r_v}^{r}f(v)r\,ds&\le C +\frac{1}{2}f(v(r_v))(\lambda^{*}+1)\\
    &\le C +\frac{1}{2}f(u_0)(\lambda^{*}+1)\le C,
\end{align*}
where $C>0$ is a constant independent of $v$. Therefore, we get 
\begin{equation*}
-v'(r)<C/r \hspace{6mm} \text{for $0<r<\sqrt{\lambda^{*}+1}$}.
\end{equation*}
We remark that $v(\sqrt{\lambda^{*}+1})\le 0$. Thus,
by integrating the above inequality on $(r,\sqrt{\lambda^{*}+1})$, we obtain
\begin{equation*}
    v(r)\le C (\log \sqrt{\lambda^{*}+1} - \log r)+v(\sqrt{\lambda^{*}+1})\le C(1+|\log r|),\hspace{2mm}\text{ $0<r<\sqrt{\lambda^{*}+1}$},
\end{equation*}
where $C>0$ is a constant independent of $v$. Thus, we get \eqref{aprioriv}. In addition, we recall that $r_{\beta}\le \sqrt{\lambda(\alpha)}\le \sqrt{\lambda^{*}}$. Hence, we get \eqref{aprioriu} by using \eqref{aprioriv}.
\end{proof}
\begin{proof}[Proof of Theorem \ref{mainthms} \rm{(A)} and \rm{(B)}]
We take any sequence $\alpha_n >0$ such that $\alpha_n\to\infty$ as $n\to \infty$. We denote by $\lambda_{n}:=\lambda(\alpha_{n})$, $u_{n}(r):=u(r,\alpha_n)$, and $v_{n}(r):=v(r,\alpha_n)$. Then, we claim that there exist $\lambda_{*}\in (0,\infty)$, a radial singular solution $U_{*}$ for $\lambda=\lambda_{*}$, and a subsequence $\{n_j\}_{j\in \mathbb{N}}$ such that $n_j\to \infty$, $\lambda_{n_j}\to \lambda_{*}$, and $u_{n_j}\to U_{*}$ as $j\to\infty$. 

Indeed, since $f$ satisfies \eqref{asf1}, \eqref{asf0}, and \eqref{asf2}, we have $F(u_0)>1$ and \eqref{yoiass} for some $u_0>1$ and $0<p<\frac{1}{2}$. We take $\gamma>0$ and $\beta>u_0$ which satisfy the assertion of Proposition \ref{superprop}. Fix $\delta>0$. Then, thanks to Proposition \ref{goodthm}, we get \eqref{aprioriv} for all $n$ sufficiently large. By \eqref{aprioriv} and the elliptic regularity theory (see \cite{gil}), there exists some $0<\delta_0<1$ such that it holds $\lVert v_{n}\rVert_{C^{2+\delta_0}(B_{\sqrt{\lambda^{*}+1}}\setminus B_{\delta})}<C$ for all $n$ sufficiently large, where $C$ is a constant independent of $n$. Since $\delta>0$ is arbitrary, by the Ascoli-Arzel\`a theorem and a diagonal argument, there exist a subsequence $n_j$ and a radial function $V\in C^2_{\mathrm{loc}}(0,\sqrt{\lambda^{*}+1})$ which satisfies 
$-\Delta V = f(V)$ in $(0,\sqrt{\lambda^{*}+1})$ such that
$v_{n_j}\to V$ in $C^2_{\mathrm{loc}}(0,\sqrt{\lambda^{*}+1})$ as $j\to\infty$. By Lemma \ref{minusgradlem}, we get  
$v_{n_j}(\sqrt{\lambda^{*}})\le 0$ and thus we have $V'\le 0$ and $V(\sqrt{\lambda^{*}})\le 0$. In addition, let $B\ge \beta$. Then, thanks to Proposition \ref{superprop}, there exists $r_{B}>0$ such that $v_{n_{j}}(r_{B})>B$ if $\alpha_{{n}_{j}}>\gamma B$. In particular, $\lim_{r\to 0} V(r)\ge V(r_B)\ge B$. Since $B$ is arbitrary, we deduce that $V$ is a radial singular solution.

We take $\lambda_{*}>0$ which satisfies $V(\sqrt{\lambda_{*}})=0$ and we define $U_{*}(r):=V(\sqrt{\lambda_{*}}r)$. Then, it is easy to see that $U_{*}$ is a radial singular solution of \eqref{gelfand} for $\lambda=\lambda_{*}$ and $\lambda_{*}\le \lambda^{*}$. Moreover, since $v_{n_j}\to V$ in $C^2_{\mathrm{loc}}(0,\sqrt{\lambda^{*}+1})$ as $j\to\infty$, we get $\lambda(\alpha_{n_j})\to \lambda_{*}$ and $u(r,\alpha_{n_j})\to U_{*}$ in $C^2_{\mathrm{loc}}(0,1]$.

Therefore, we get Theorem \ref{mainthms} (B) and $\liminf_{\alpha\to\infty}\lambda(\alpha)>0$. Here, we recall that the bifurcation curve is continuous. Hence, if $f(0)=0$, then we deduce that there exists no solution for any $\lambda$ sufficiently small. On the other hand, when $f(0)>0$, then it is well-known \cite{Dup} that there exists some $\alpha_0>0$ such that if $0<\alpha<\alpha_0$, then $\lambda(\alpha)$ is increasing and $u(r,\alpha)$ is the minimal solution for $\lambda=\lambda(\alpha)$. Therefore, we deduce that there exists only the minimal solution for any $\lambda$ sufficiently small.
\end{proof}

\section{An \texorpdfstring{$H^1$}{LG}-estimate for finite Morse index solutions}
\label{oscisec}
In this section, we prove an $H^1$-estimate for finite Morse index solutions. As a result, we show Theorem \ref{mainthms} (C) and (D). We begin by introducing a basic lemma.
\begin{lemma}
\label{lem2}
Let $z\in B_1$ and $0<\rho<1$ such that $B_{2\rho}(z)\subset B_1$. Then, there exists a Lipschitz function $\xi:\mathbb{R}^2\to [0,1]$ such that $\xi=1$ in $A:=B_{\rho}(z)\setminus B_{\frac{\rho}{2}}(z)$, $\xi=0$ in $\mathbb{R}^2\setminus 2A$, and 
\begin{equation*}
\int_{\mathbb{R}^2} |\nabla \xi|^2\, dx \le C,
\end{equation*}
where $2A:=B_{2\rho}(z)\setminus B_{\frac{\rho}{4}}(z)$ and $C>0$ is a constant independent of $z$ and $\rho$.
\end{lemma}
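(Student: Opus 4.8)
The plan is to construct $\xi$ explicitly as a piecewise-linear (in the radial variable $s:=|x-z|$) cutoff function adapted to the four radii $\rho/4$, $\rho/2$, $\rho$, $2\rho$. First I would define, for $s = |x-z|$,
\[
\xi(x):=
\begin{cases}
0, & 0\le s\le \rho/4,\\
\dfrac{4s-\rho}{\rho}, & \rho/4\le s\le \rho/2,\\
1, & \rho/2\le s\le \rho,\\
\dfrac{2\rho-s}{\rho}, & \rho\le s\le 2\rho,\\
0, & s\ge 2\rho.
\end{cases}
\]
One checks immediately that $\xi$ is continuous, takes values in $[0,1]$, equals $1$ on $A=B_\rho(z)\setminus B_{\rho/2}(z)$, and vanishes outside $2A=B_{2\rho}(z)\setminus B_{\rho/4}(z)$. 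Since $s\mapsto \xi$ is piecewise linear with slopes $0$, $\pm 4/\rho$, $\pm 1/\rho$, and $x\mapsto |x-z|$ is $1$-Lipschitz, the composition $\xi$ is Lipschitz on $\mathbb{R}^2$ with $|\nabla\xi|\le 4/\rho$ a.e.

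Next I would estimate the Dirichlet energy. The gradient is supported on the two annuli $B_{\rho/2}(z)\setminus B_{\rho/4}(z)$ and $B_{2\rho}(z)\setminus B_\rho(z)$, on which $|\nabla\xi|\le 4/\rho$ and $|\nabla\xi|\le 1/\rho$ respectively. Each annulus has area bounded by $C\rho^2$ (with $C$ absolute), so
\[
\int_{\mathbb{R}^2}|\nabla\xi|^2\,dx \le \frac{16}{\rho^2}\cdot C\rho^2 + \frac{1}{\rho^2}\cdot C\rho^2 = C,
\]
with $C$ independent of $z$ and $\rho$. This is exactly the claimed bound. (It is worth noting that the scale invariance here is precisely the two-dimensional feature: in $\mathbb{R}^N$ the same construction would give a bound $C\rho^{N-2}$, which is uniform only when $N=2$ — but since we only need $N=2$ this is no obstacle.)

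There is essentially no hard part: the only things to be careful about are (a) confirming that the hypothesis $B_{2\rho}(z)\subset B_1$ is used only to guarantee that $2A\subset B_1$ so that $\xi$ is a legitimate test function with support in $B_1$ (the construction and energy bound themselves do not need it), and (b) making sure the piecewise definition is genuinely Lipschitz at the junction radii, which follows from continuity plus the uniform bound on the one-sided derivatives. If one prefers a smooth $\xi$ rather than merely Lipschitz, one can instead take $\xi = \eta(s/\rho)$ for a fixed smooth function $\eta$ with $\eta\equiv 0$ near $[0,1/4]$ and near $[2,\infty)$, $\eta\equiv 1$ on $[1/2,1]$, $0\le\eta\le 1$; then $|\nabla\xi|\le \|\eta'\|_\infty/\rho$ and the same area computation gives $\int|\nabla\xi|^2 \le \|\eta'\|_\infty^2\, |2A|/\rho^2 \le C$. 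Either version suffices for the applications in this section, so I would present the explicit piecewise-linear one for concreteness.
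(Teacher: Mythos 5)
Your proof is correct and follows essentially the same approach as the paper: an explicit radial cutoff equal to $1$ on $A$, supported in $2A$, whose Dirichlet energy is bounded independently of $\rho$ and $z$ by the two-dimensional scaling $|\nabla\xi|^2\sim\rho^{-2}$ against annuli of area $\sim\rho^2$. The only (cosmetic) difference is that the paper interpolates logarithmically in $|x-z|$ and integrates $|x|^{-2}$ over annuli with fixed radius ratio, while you interpolate linearly; both give the same scale-invariant constant.
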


\begin{proof}
Without loss of generality, we assume that $z=0$. We define $\xi$ as 
\begin{equation*}
\xi(x):=
\begin{cases}
1 &\text{if $\frac{\rho}{2}\le |x|\le \rho$},\\
\frac{\log(\rho/4)- \log (|x|)}{\log (\rho/4)-\log(\rho/2)} &\text{if $\frac{\rho}{4}<|x|<\frac{\rho}{2}$},\\
\frac{\log(2\rho)- \log (|x|)}{\log (2\rho)-\log(\rho)} &\text{if $\rho<|x|<2\rho$},\\
0&\text{otherwise}.
\end{cases}
\,
\end{equation*}
Then, we have  $\xi=1$ in $A:=B_{\rho}(z)\setminus B_{\frac{\rho}{2}}(z)$, $\xi=0$ in $\mathbb{R}^2\setminus 2A$. Moreover, we get
\begin{equation*}
\int_{\mathbb{R}^2} |\nabla\xi|^2\,dx=C\int_{B_{\rho/2}\setminus B_{\rho/4}} \frac{1}{|x|^2}\,dx+C\int_{B_{2\rho}\setminus B_{\rho}} \frac{1}{|x|^2}\,dx\le C. 
\end{equation*}
Thus, we get the result.
\end{proof}
Thanks to Lemma \ref{lem2}, we obtain the following lemma by using the same argument as in the proof of \cite[Lemma 2.2]{Figalli}. 
\begin{lemma}
\label{superlem}
Let $N=2$ and $k\in \mathbb{N}$. We assume that $f$ satisfies \eqref{asf1}, \eqref{asf0} and \eqref{asf2}. If $(\lambda(\alpha), u(r,\alpha))$ is a solution of \eqref{gelfand} such that $m(u(r,\alpha))\le k$, then, we have 
\begin{equation*}
\int_{B_{\rho}}\lambda(\alpha)f'(u)\,dx\le C(1+k)
\end{equation*}
for all $0<\rho<\frac{1}{2}$ satisfying $f'(u)>0$ in $B_{2\rho}$, where $C>0$ is independent of $f$, $u$, and $k$.
\end{lemma}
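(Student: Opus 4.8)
The plan is to run a standard covering/annulus argument exactly as in \cite[Lemma 2.2]{Figalli}, the only new input being the logarithmic cut-off constructed in Lemma \ref{lem2} (which replaces the power-type cut-off used in dimensions $N\ge 3$). Fix a solution $u=u(r,\alpha)$ with $m(u)\le k$ and fix $0<\rho<\frac12$ with $f'(u)>0$ on $B_{2\rho}$. First I would cover the ball $B_\rho$ by a controlled number of dyadic annuli: for $i=0,1,\dots$, set $\rho_i:=2^{-i}\rho$ and $A_i:=B_{\rho_i}\setminus B_{\rho_i/2}$, so $B_\rho=\bigcup_{i\ge 0}A_i$ up to the center. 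The point of working with annuli rather than balls is that $2A_i:=B_{2\rho_i}\setminus B_{\rho_i/4}\subset B_{2\rho}$, so the cut-off $\xi_i$ from Lemma \ref{lem2} (with $z=0$, $\rho=\rho_i$) is admissible: $\xi_i\in C^{0,1}_c(B_1)$, $\xi_i\equiv 1$ on $A_i$, $\xi_i\equiv 0$ outside $2A_i$, and $\int|\nabla\xi_i|^2\le C$ with $C$ independent of $i$ (this is the radial scale-invariance of the Dirichlet integral in two dimensions, which is precisely why the logarithmic profile works).

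Next, I would exploit the Morse-index bound. The functions $\xi_0,\xi_2,\xi_4,\dots$ (every other one) have pairwise disjoint supports, since $\mathrm{supp}\,\xi_i\subset 2A_i=B_{2\rho_i}\setminus B_{\rho_i/4}$ and $2A_i\cap 2A_{i+2}=\emptyset$. Hence at most $k$ of the even-indexed $\xi_i$ can make the quadratic form $Q(\varphi):=\int_{B_1}|\nabla\varphi|^2-\lambda f'(u)\varphi^2\,dx$ negative; otherwise we would produce a $(k+1)$-dimensional subspace of $C_0^\infty(B_1)$ (after a routine mollification of the Lipschitz cut-offs) on which $Q<0$, contradicting $m(u)\le k$. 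The same applies to the odd-indexed family. Therefore, for all but at most $2k$ indices $i$ we have $Q(\xi_i)\ge 0$, i.e.
\begin{equation*}
\int_{B_1}\lambda f'(u)\,\xi_i^2\,dx\le\int_{B_1}|\nabla\xi_i|^2\,dx\le C.
\end{equation*}
Since $\xi_i\equiv 1$ on $A_i$, this gives $\int_{A_i}\lambda f'(u)\,dx\le C$ for all but at most $2k$ values of $i$. For the remaining (at most $2k$) "bad" annuli I would bound $\int_{A_i}\lambda f'(u)\,dx$ by a single constant $C$ as well: on each such $A_i$ we have $f'(u)>0$, and the a priori bound \eqref{aprioriu} of Proposition \ref{goodthm} gives $u(r)\le C(1+|\log r|)$ uniformly in $\alpha$ on $0<r\le1$, while $\lambda\le\lambda^*$; combined with the elliptic estimate $-\Delta u=\lambda f(u)$ and integration one controls $\int_{B_{2\rho}}\lambda f(u)\,dx$, hence also $\int_{A_i}\lambda f'(u)\,dx$ since $f'\le f^2/F$ up to constants near infinity by \eqref{weakassum} and the monotonicity of $F/f$ — more directly, one simply notes $\int_{A_i}\lambda f'(u)\,dx\le\int_{B_{2\rho}}\lambda f'(u)\,dx$ is not yet known, so instead I would bound each bad annulus by the crude estimate $\int_{A_i}\lambda f'(u)\,dx\le C$ coming from $\lambda\le\lambda^*$, the uniform bound on $u$ in $B_{2\rho}\setminus\{0\}$ from \eqref{aprioriu} when $2\rho$ is away from $0$, and a separate direct argument near the origin. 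Summing over all $i\ge 0$ then yields
\begin{equation*}
\int_{B_\rho}\lambda f'(u)\,dx=\sum_{i\ge 0}\int_{A_i}\lambda f'(u)\,dx\le\underbrace{C\cdot(\text{number of good }i)}_{\text{telescopes via disjointness of }A_i}+\ C\cdot 2k\le C(1+k),
\end{equation*}
where the first sum is genuinely bounded because the good annuli are disjoint and each contributes $\le C$ — wait, that is not quite a finite sum, so in fact I would organize the good-annulus estimate differently: group the good $\xi_i$ with disjoint supports and use $\sum_i\int_{A_i}\lambda f'(u) = \int_{\bigcup A_i}\lambda f'(u)\le\int_{B_\rho}\lambda f'(u)$, and instead bound $\int_{A_i}\lambda f'(u)\le C$ for each $i$ by testing $Q\ge0$ against $\xi_i$ as above, concluding that the tail $\sum_{i\ge i_0}\int_{A_i}\lambda f'(u)$ is summable because on small balls $B_{2\rho_i}$ the a priori estimate forces $\lambda f(u)$ — and hence, by convexity and \eqref{weakassum}, $\lambda f'(u)$ — to have integral decaying in $i$.

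The honest structure is therefore: good annuli each give $\int_{A_i}\lambda f'(u)\le C$, but I cannot sum infinitely many such; so the real mechanism (as in \cite{Figalli}) is that the \emph{tail} $\sum_{i\ge i_0}$ is controlled by a single integral $\int_{B_{\rho_{i_0}}}\lambda f'(u)\,dx$ which one shows is small — this is where I expect the main obstacle. Concretely, the hard part will be showing that $\int_{B_r}\lambda f'(u)\,dx\to 0$ as $r\to 0$ \emph{uniformly in $\alpha$}, i.e. that the family $\{\lambda(\alpha) f'(u(\cdot,\alpha))\}_\alpha$ is uniformly integrable near the origin. For this I would use Proposition \ref{goodthm}: from $u(r)\le C(1+|\log r|)$ and $-u'(r)\le C/r$ one gets, integrating $-\Delta u=\lambda f(u)$ over $B_r$, that $\int_{B_r}\lambda f(u)\,dx=-2\pi r\,u'(r)\le C$, a uniform bound but not smallness; smallness in $r$ then follows from the fact that $r\,u'(r)\to$ a limit, or more robustly from the gradient estimate \eqref{poho} rewritten as $\int_{B_r}\lambda f(u)\le C\,F(u(r))/f(u(r))$, which by the supercriticality consequence $F/f\to 0$ along $u(r)\to\infty$ (a consequence of \eqref{supercritical}: $F(u)\ge e^{(c_1u)^{1/p}}$ forces $f(u)/F(u)=(F/f)^{-1}\to\infty$) does tend to $0$ as $r\to0$ for the singular-solution limit, and uniformly in $\alpha$ on the regular solutions because $u(r,\alpha)\ge\beta$ on a fixed ball $B_{r_\beta}$ by Proposition \ref{superprop}. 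Finally, $f'(u)\le f(u)^2/F(u)\cdot$const near infinity by \eqref{weakassum} (or directly $f'\le Cf$ in any fixed range by \eqref{asf1}), so the same smallness passes from $\lambda f(u)$ to $\lambda f'(u)$. Combining: fix $i_0$ with $\int_{B_{\rho_{i_0}}}\lambda f'(u)\le 1$ uniformly; the finitely many annuli $A_0,\dots,A_{i_0-1}$ contribute at most $C i_0\le C$ by testing $Q$ (at most $2k$ of them bad, each still $\le C$); the tail is $\le 1$. This gives $\int_{B_\rho}\lambda f'(u)\,dx\le C(1+k)$, with $C$ independent of $f$, $u$, $k$, as claimed.
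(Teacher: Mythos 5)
Your opening skeleton --- dyadic annuli, the logarithmic cut-offs of Lemma \ref{lem2}, the even/odd disjointness, and the count showing that at most $2k$ annuli can carry a test function making the quadratic form negative while every stable annulus yields $\int_{A_i}\lambda f'(u)\,dx\le\int|\nabla\xi_i|^2\,dx\le C$ --- is indeed the route the paper intends (its proof is a direct appeal to the argument of \cite[Lemma 2.2]{Figalli}, with Lemma \ref{lem2} supplying the two-dimensional cut-offs). But your proposal does not close the two points where the two-dimensional case actually requires work, and the patches you offer are incorrect. First, you never produce any bound for the at most $2k$ unstable annuli: testing gives nothing there, and the ``crude'' estimate via \eqref{aprioriu} fails, since $u\le C(1+|\log r|)$ only yields $f'(u)\le f'\bigl(C(1+|\log r|)\bigr)$, and for instance for $f(u)=e^{u^p}$ with $p>2$ the quantity $f'\bigl(C(1+|\log r|)\bigr)\,r^2$ blows up as $r\to0$; moreover any bound obtained this way would depend on $f$, whereas the constant in the statement must not. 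Second, your mechanism for the tail rests on a reversed inequality: \eqref{weakassum} says $(F/f)'\le 0$, i.e.\ $f'\ge f^2/F$, not $f'\le f^2/F$, and there is no bound $f'\le Cf$ valid on an unbounded range of $u$ for supercritical $f$ (on the contrary $f'/f\to\infty$). Hence the smallness of $\int_{B_r}\lambda f(u)\,dx$, which does follow from \eqref{poho}, does not transfer to $\int_{B_r}\lambda f'(u)\,dx$; controlling $f'(u)$ rather than $f(u)$ is precisely the content of the lemma, and is exactly why the stability information has to be exploited at every scale rather than only down to a finite scale.

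As a consequence, your concluding step is circular: choosing $i_0$ with $\int_{B_{\rho_{i_0}}}\lambda f'(u)\,dx\le 1$ uniformly in $\alpha$ presupposes uniform integrability of $\lambda(\alpha)f'(u(\cdot,\alpha))$ near the origin, which is essentially the assertion to be proved there (and is not even clearly true: the lemma only gives a uniform bound, and the family $\lambda f'(u_\alpha)$ could a priori concentrate mass at the origin); likewise ``$Ci_0\le C$'' is meaningful only if $i_0$ is universal, which you have not shown, and the parenthetical claim that the bad annuli are ``each still $\le C$'' is unsupported. The obstacle you honestly flag --- that in two dimensions the per-annulus energy does not decay with the scale, so one cannot sum over all good annuli as one does for $N\ge3$, where the cut-off energies behave like $\rho_i^{N-2}$ --- is the genuine difficulty, and it must be resolved by the actual mechanism of \cite[Lemma 2.2]{Figalli} rather than by the $f\mapsto f'$ shortcut above; as it stands, the proposal identifies the right framework but leaves the decisive estimates unproven.
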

As a result, we get the following $H^1$-estimate.
\begin{proposition}
\label{h1prop}
We assume that $f$ satisfies \eqref{asf1}, \eqref{asf0}, and \eqref{asf2}.
Let $k\in \mathbb{N}$ and $(\lambda(\alpha), u(r,\alpha))$ be a solution of \eqref{gelfand} such that $m(u(r,\alpha))\le k$. Then, the solution $u=u(r,\alpha)$ satisfies
\begin{equation*}
    \lVert\nabla u \rVert_{L^2(B_1)}\le C,
\end{equation*}
where $C>0$ is depending only on $f$.
\end{proposition}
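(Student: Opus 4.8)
The plan is to write the Dirichlet energy as a superposition, over the level sets of the rescaled solution $v$, of the flux of $-\Delta v$ through $\partial\{v>t\}$, to bound each flux by the gradient estimate \eqref{poho}, and to add these bounds; the supercriticality \eqref{asf2} will enter precisely through the integrability of $F/f$ at infinity. Testing \eqref{gelfand} with $u$ and passing to $v=u(\cdot/\sqrt{\lambda})$ gives
\begin{equation*}
\lVert\nabla u\rVert_{L^2(B_1)}^2=\lambda\int_{B_1}uf(u)\,dx=\int_{B_{\sqrt{\lambda}}}vf(v)\,dx ,
\end{equation*}
so it suffices to bound the last integral by a constant depending only on $f$. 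If $\alpha\le\alpha_0$ (with $\alpha_0$ as in Proposition \ref{goodthm}) this is immediate, since then $v\le\alpha\le\alpha_0$, $f(v)\le f(\alpha_0)$ and $\lambda\le\lambda^*$; hence I may assume $\alpha>\alpha_0$. Writing $vf(v)=\int_0^{v}f(v)\,dt$, applying Tonelli's theorem, and using that $v$ is radially decreasing so that $\{v>t\}=B_{\sigma_t}$ with $v(\sigma_t)=t$ for $0<t<\alpha$, together with the divergence theorem applied to $-\Delta v=f(v)$ on $B_{\sigma_t}$,
\begin{equation*}
\int_{B_{\sqrt{\lambda}}}vf(v)\,dx=\int_0^{\alpha}\Bigl(\int_{B_{\sigma_t}}f(v)\,dx\Bigr)dt=\int_0^{\alpha}2\pi\sigma_t\bigl(-v'(\sigma_t)\bigr)\,dt .
\end{equation*}

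The next step is to bound $\sigma_t(-v'(\sigma_t))$ level by level. Fix $u_0>1$ with $F(u_0)>1$ and \eqref{yoiass} valid for some $p\in(0,\tfrac12)$; then \eqref{weakassum} holds by Lemma \ref{basiclem}. For $0<t\le u_0$, the estimate \eqref{aprioriv} (valid since $\alpha>\alpha_0$) gives $-v'(r)<C/r$ on $(0,\sqrt{\lambda^*+1})$, and since $\sigma_t\le\sqrt{\lambda}\le\sqrt{\lambda^*}$ this yields $\sigma_t(-v'(\sigma_t))\le C$ with $C=C(f)$. For $t>u_0$ one has $v(\sigma_t)=t\ge u_0$, and Lemma \ref{poholem} applies (its hypotheses hold because $v(0)=\alpha>\alpha_0>u_0$, $F(u_0)>1$, and \eqref{weakassum}), so \eqref{poho} gives $\sigma_t(-v'(\sigma_t))\le 4F(t)/f(t)$. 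Combining these two ranges,
\begin{equation*}
\lVert\nabla u\rVert_{L^2(B_1)}^2\le 2\pi C\,u_0+8\pi\int_{u_0}^{\infty}\frac{F(t)}{f(t)}\,dt .
\end{equation*}

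The last step, which is where supercriticality is used, is to check that the tail integral is finite. By \eqref{supercritical}, $F(t)\ge e^{(c_1t)^{1/p}}$, so $\log F(t)\ge (c_1t)^{1/p}$, while $F(t)\log F(t)/f(t)\le pt+C_1$; dividing, $F(t)/f(t)\le (pt+C_1)(c_1t)^{-1/p}\le C\,t^{\,1-1/p}$ for $t\ge u_0$. Since $0<p<\tfrac12$ we have $1-1/p<-1$, hence $\int_{u_0}^{\infty}F/f\,dt<\infty$ with a value controlled by $f$ alone, and $\lVert\nabla u\rVert_{L^2(B_1)}\le C(f)$ follows.

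The only points requiring care are routine: that $v$ is strictly radially decreasing (so each $\{v>t\}$ is a ball, which follows from $-\Delta v=f(v)\ge 0$ together with $f>0$ where $v>0$) and that the constant in $-v'(r)<C/r$ is independent of $\alpha$ — both are already contained in Proposition \ref{goodthm} and the structure of \eqref{eqf(v)}. I therefore do not anticipate a substantive obstacle: the whole content is the interaction between the flux bound \eqref{poho}, which controls the flux across each level $\{v=t\}$ by $4F(t)/f(t)$, and the supercriticality \eqref{asf2}, which makes $\int_{u_0}^{\infty}F/f\,dt$ converge. In particular this argument uses neither Lemma \ref{lem2} nor Lemma \ref{superlem}, and the resulting constant depends only on $f$, as asserted.
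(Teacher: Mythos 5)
Your argument is correct, and it is genuinely different from the paper's proof. The paper integrates the identity \eqref{identity} for $\frac{f(u)}{F(u)}ru'$, bounds its first right-hand term by $\int_{B_\rho}\lambda f'(u)\,dx\le C(1+k)$ using the Morse index through Lemma \ref{superlem} (whose proof needs the logarithmic cutoff of Lemma \ref{lem2}), and extracts $\lVert\nabla u\rVert_{L^2(B_{\hat r})}^2$ from the coercive second term via \eqref{yoiass} and \eqref{supercritical}; in particular the constant produced there actually grows like $1+k$. You instead start from the energy identity $\lVert\nabla u\rVert_{L^2(B_1)}^2=\lambda\int_{B_1}uf(u)\,dx=\int_{B_{\sqrt\lambda}}vf(v)\,dx$, decompose over level sets, and bound the flux through $\partial\{v>t\}$ by $4F(t)/f(t)$ via \eqref{poho} for $t\ge u_0$ (and by \eqref{aprioriv} for $t\le u_0$); equivalently, since $v'<0$ where $v>0$, the substitution $t=v(r)$ turns $2\pi\int_0^{\sqrt\lambda}(-rv')( -v')\,dr$ directly into $\int_0^\alpha 2\pi\sigma_t(-v'(\sigma_t))\,dt$, so your decomposition is exactly the Dirichlet energy and all steps (Tonelli, divergence theorem, the hypotheses of Lemma \ref{poholem}, the trivial case $\alpha\le\alpha_0$) check out. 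The closing estimate $F/f\le(pt+C_1)(c_1t)^{-1/p}\le Ct^{1-1/p}$ with $1-1/p<-1$ is a correct use of Lemma \ref{basiclem}, and it is the right place for supercriticality to enter: for a critical nonlinearity such as $e^{u^2}$ one has $F/f\sim(2u)^{-1}$ and the tail integral diverges, so your method degenerates exactly where it should. What your route buys: the bound is uniform in the Morse index (the hypothesis $m(u)\le k$ is never used), so it proves a strictly stronger statement and, unlike the paper's proof, literally delivers a constant depending only on $f$; it also renders Lemmas \ref{lem2} and \ref{superlem} unnecessary for this proposition. What the paper's route buys is robustness: the Figalli--Zhang-type argument does not hinge on the radial level-set structure, whereas your co-area/flux computation is tied to the radially decreasing setting (which, in this paper, is all that is needed).
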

We point out that the identity \eqref{POHO} and the supercritical condition \eqref{asf2} play a key role in proving Proposition \ref{h1prop}. In fact, the idea of the proof is to observe the identity \eqref{identity}. Then, \eqref{POHO} ensures that the left-hand side of the identity is positive and it is controlled by some constant $C>0$ from above. In addition, \eqref{asf2} and Lemma \ref{superlem} tell us that the first integral of the right-hand side is controlled by some constant $C>0$ from above and
the second integral of the right-hand side is controlled by $C\lVert\nabla u\rVert^2_{L^2(B_r)}$ from below. Therefore, we get the $H^1$-estimate. 
\begin{proof}
Since $f$ satisfies \eqref{asf1}, \eqref{asf0}, and \eqref{asf2}, we have $F(u_0)>1$, $f'(u_0)>0$ and 
\eqref{yoiass} for some $u_0>1$ and $0<p<\frac{1}{2}$.
Then, thanks to the elliptic regularity theory, we 
assume without loss of generality that $\alpha\ge \gamma\beta$, where $\beta$ and $\gamma$ are those in Proposition \ref{superprop}. We take $r_{\alpha}>0$ such that $v(r_{\alpha},\alpha)=u_0$. Then, by Proposition \ref{superprop} and Lemma \ref{minusgradlem}, we deduce that there exists $r_{\beta}>0$ such that it holds $r_{\alpha} \ge r_{\beta}$ for all $\alpha$ satisfying $\alpha\ge \gamma\beta$.

Since $u$ satisfies \eqref{gelfand} for $\lambda=\lambda(\alpha)$, we get
\begin{equation*}
    -\left(\frac{f(u)}{F(u)}ru'\right)'=\frac{\lambda(\alpha)r f^2(u )}{F(u)}-\frac{f'(u)F(u)-f^2(u)}{F^2(u)}{u'}^{2}r.
\end{equation*}
We denote by $\hat{r}:=\frac{r_\beta}{2\sqrt{\lambda_*}}$. Let $r\le \hat{r}$. Integrating the above on $(0,r)$, it holds
\begin{equation}
\label{identity}
    -\frac{f(u)}{F(u)}ru'=\int_{0}^{r}\frac{\lambda(\alpha)f^{2}(u) s}{F(u)}\,ds-\int_{0}^{r}\frac{f'(u)F(u) -f^{2}(u)}{F^{2}(u)}{u'}^{2}s\,ds.
\end{equation}
Here, we remark that thanks to Lemma \ref{basiclem}, it holds \eqref{weakassum} . In addition, thanks to Lemma \ref{poholem} and Lemma \ref{minusgradlem}, it satisfies \eqref{poho}. 
We denote by $t:=\sqrt{\lambda(\alpha)} s$. Then, it follows for all $0<r<\hat{r}$ that
\begin{equation*}
    -\frac{f(u(r))}{F(u (r))}r u'(r)=-\frac{f(v (\sqrt {\lambda(\alpha)} r))}{F(v(\sqrt {\lambda(\alpha)} r))} (\sqrt {\lambda(\alpha)} r) v'(\sqrt {\lambda(\alpha)} r)\le 4, 
\end{equation*}
and
\begin{equation*}
    \frac{f^{2}(u(s))}{F(u(s))}=\frac{f^{2}(v(t)) }{F(v(t))}\le f'(v(t))=f'(u(s)) \hspace{4mm}\text{by \eqref{weakassum}}.
\end{equation*}
Moreover, thanks to \eqref{yoiass} and \eqref{supercritical}, we get
\begin{align*}
    &\frac{f'(u(s))F(u(s)) -f^{2}(u (s))}{F^{2}(u(s))}=\frac{f'(v(t) )F(v(t)) -f^{2}(v(t))}{F^{2}(v (t))}\\
    &=-\log F(v)\frac{f^{2}(v)-f'(v)F(v)}{f^{2}(v)}\cdot \left(\frac{f(v)v}{F(v)\log F(v)}\right)^2 \cdot \frac{\log F(v)}{v^2 }\\
    &\ge (1-p) v^{-2}(p+C_1 v^{-1}(t))^{-2}(c_1v)^{1/p}\ge C v^{\frac{1}{p} - 2}\ge C,
\end{align*}
where $C>0$ is a constant depending only on $f$. In addition, we remark that
it follows from Lemma \ref{minusgradlem} that $f'(u)>0$ for all $r<2\hat{r}$.
Hence, by combining the facts stated above with Lemma \ref{superlem}, we get
\begin{equation*}
    \int_{B_{\hat{r}}} |\nabla u|^2\,dx \le C,
\end{equation*}
where $C>0$ is a constant depending only on $f$. 
Thus, by combining the above estimate with \eqref{aprioriu}, we get the result.
\end{proof}

\begin{proof}[Proof of Theorem \ref{mainthms} \rm{(D)}]
Argue by contradiction, we assume that there exist $k\in \mathbb{N}$ and a sequence $\{\alpha_{n}\}_{n\in \mathbb{N}}$ such that $\alpha_n \to \infty$ and 
$m(u(r,\alpha_n))\le k$. Thanks to Theorem \ref{mainthms} (B), there exist $\lambda_{*}\in (0,\infty)$ and a radial singular solution $U_{*}\in C^2_{\mathrm{loc}}(0,1]$ of \eqref{gelfand} for $\lambda=\lambda_{*}$ such that $\lambda(\alpha_n)\to \lambda_{*}$ and $u(r,\alpha_n)\to U_{*}$ in $C^2_{\mathrm{loc}}(0,1]$ as $n\to \infty$ by taking a subsequence if necessary. We denote by $u_{n}(r):=u(r,\alpha_n)$ and $\lambda_n = \lambda(\alpha_n)$. Thanks to Proposition \ref{h1prop} and the Poincar\'e inequality, it holds  $\lVert\nabla u_n \rVert_{H^1(B_1)}<C$. Therefore, we deduce that $U_{*}\in H^{1}_{0}(B_1)$ and $u_n\rightharpoonup U_{*}$ in $H^1_{0}(B_1)$ by taking a subsequence if necessary. 

Then, we use the method in the proof of \cite[Theorem 4.1]{CFRS} and prove that $f(U_{*})\in L^1_{\mathrm{loc}}(B_1)$ and $-\Delta U_{*} = \lambda_{*} f(U_{*})$ in $B_1$ in the weak sense. Let $\phi\in C^{0,1}_{0}(B_1)$. Without loss of generality, we assume that $\phi\ge 0$. Then, we get
\begin{equation*}
\int_{B_1}\nabla U_{*} \cdot \nabla \phi\,dx= \lim_{n\to\infty}\int_{B_1}\nabla u_n \cdot \nabla \phi\,dx=\lim_{n\to\infty}\int_{B_1}\lambda_n f(u_n)\phi\,dx.
\end{equation*}
Let $j\in \mathbb{N}$ and we take a continuous function $\varphi:\mathbb{R}\to[0,1]$ such that $\varphi=0$ on $(-\infty,0]$ and $\varphi=1$ on $[1,\infty)$. Then, we see that
\begin{align*}
\int_{B_1}\lambda_n f(u_n)\varphi(u_n-j)\phi\,dx\le C\int_{\{u_n>j\}}\lambda_n f(u_n)\,dx&\le 
\frac{C}{j}\int_{\{u_n>j\}}\lambda_n f(u_n)u_n\,dx \\
&\le\frac{C}{j}\int_{B_1} |\nabla u_n|^2\,dx\le \frac{C}{j},
\end{align*}
where $C$ is depending only on $f$ and $\phi$. In particular, by Fatou's Lemma, we have
\begin{equation*}
\int_{B_1}\lambda_{*}f(U_{*})\varphi(U_{*}-j)\phi\,dx\le \frac{C}{j}.
\end{equation*} 
On the other hand, by the dominated convergence theorem, we deduce that
\begin{equation*}
\int_{B_1}\lambda_{n}f(u_n)(1-\varphi(u_n-j))\phi\,dx\to \int_{B_1}\lambda_{*}f(U_{*})(1-\varphi(U_{*}-j))\phi\,dx \hspace{4mm}\text{as $n\to\infty$}.
\end{equation*}
Therefore, we have
\begin{align*}
\lim_{n\to\infty}\int_{B_1}\lambda_n &f(u_n)\phi\,dx\le \lim_{n\to\infty} \int_{B_1}\lambda_n f(u_n)(1-\varphi(u_n-j))\phi\,dx+\frac{C}{j}\\
&=\int_{B_1}\lambda_{*}f(U_{*})(1-\varphi(U_{*}-j))\phi\,dx+\frac{C}{j}\le\int_{B_1}\lambda_{*}f(U_{*})\phi\,dx+\frac{C}{j}
\end{align*}
and
\begin{align*}
\lim_{n\to\infty}\int_{B_1}\lambda_n &f(u_n)\phi\,dx\ge \lim_{n\to\infty} \int_{B_1}\lambda_n f(u_n)(1-\varphi(u_n-j))\phi\,dx\\&
=\int_{B_1}\lambda_{*}f(U_{*})(1-\varphi(U_{*}-j))\phi\,dx \ge \int_{B_1}\lambda_{*}f(U_{*})\phi\,dx -\frac{C}{j}.
\end{align*}
By letting $j\to\infty$, we get the claim. In particular, since $U_{*}\in H^1_{0}(B_1)$, we get $f(U_{*})\in L^1_{\mathrm{loc}}(B_1)$.

In addition, by the proof of \cite[Proposition 2.3]{Figalli}, we see that $f'(U_{*})\in L^1_{\mathrm{loc}}(B_1)$ and $m(U_{*})\le k$. Therefore, by \cite[Proposition 1.5.1]{Dup}, $U_{*}$ is locally stable, i.e., there exists $r_0>0$ such that $U_{*}(r_0)>0$ and $U_{*}$ is stable in $B_{r_0}$. We denote by $W:=U_{*}-U_{*}(r_0)$. Then, $W\in H^1_{0}(B_{r_0})$ is a singular stable solution of \begin{equation*}
\left\{
\begin{alignedat}{4}
 -\Delta w&=\mu g(w)&\hspace{2mm} &\text{in } B_{r_0},\\
w&>0  & &\text{in } B_{r_0}, \\
w&=0  & &\text{on } \partial B_{r_0}
\end{alignedat}
\right.
\end{equation*}
for $\mu=\lambda_{*}$ and $g(w):=f(w+U_{*}(r_0))$. By \cite[Theorem 3.1]{Br}, $W\in H^1_{0}(B_1)$ is the singular extremal solution. It contradicts the result of \cite{cabrecapella}.
\end{proof}

\begin{proof}[Proof of Theorem \ref{mainthms} \rm{(C)}]
We remark that if $(\lambda(\alpha), u(r,\alpha))$ is a solution of \eqref{gelfand}, then it follows that $m(u)<\infty$ and $m(u)$ is equal to the number of negative eigenvalues of the linearized operator $-\Delta - \lambda f'(u)$ in $H^1_{0}(B_1)$ (see \cite[Proposition 1.5.1]{Dup}). We also remark that the bifurcation curve of \eqref{gelfand} is an analytic curve, which has no self-intersection and no secondary bifurcation point (since the curve is parameterized by $\alpha$).
Therefore, thanks to Theorem \ref{mainthms} (D), we get Theorem \ref{mainthms} (C) by using the methods similar to the proof of \cite[Theorem 2.2]{Guowei} (see also \cite{BD,Dan}).
\end{proof}

\subsection*{Acknowledgment}
The author would like to thank Professor Michiaki Onodera, for his valuable advice to improve the presentation. He appreciates Professor Norisuke Ioku for information about his recent work and valuable discussions. He is also grateful to Professor Daisuke Naimen for information about his recent work and stimulating discussions.

\bibliographystyle{plain}
\bibliography{bifurcation_2_revised.bib}

\end{document}